\theoremstyle{plain}
\newtheorem{theorem}{Theorem}[section]
\newtheorem{cor}[theorem]{Corollary}
\newtheorem{lemma}[theorem]{Lemma}
\theoremstyle{definition}
\newtheorem{remark}[theorem]{Remark}
\newtheorem{definition}[theorem]{Definition}
\newcommand{\N}{\mathbb{N}}
\newcommand{\C}{\mathbb{C}}
\newcommand{\Lin}{\mathcal{L}}
\newcommand{\eps}{\varepsilon}
\DeclareMathOperator{\re}{Re}
\DeclareMathOperator{\NA}{NA}
\renewcommand{\leq}{\leqslant}
\renewcommand{\geq}{\geqslant}
\title{The Bishop-Phelps-Bollob\'as property on the space of $c_0$-sum}
\author[G.~Choi]{Geunsu Choi}
\address[G.~Choi]{Department of Mathematics, Institute for Industrial and Applied Mathematics, Chungbuk National University, Cheongju, Chungbuk 28644, Republic of Korea}
\email{\texttt{chlrmstn90@gmail.com}}
\author[S.~K.~Kim]{Sun Kwang Kim}
\address[S.~K.~Kim]{Department of Mathematics, Chungbuk National University, Cheongju, Chungbuk 28644, Republic of Korea}
\email{\texttt{skk@chungbuk.ac.kr}}
\thanks{The first and second authors were supported by the National Research Foundation of Korea(NRF) grant funded by the Korea government(MSIT) [NRF-2020R1C1C1A01012267]. }
\date{\today}
\subjclass[2010]{Primary 46B04; Secondary  46B07, 46B20}
\keywords{Banach space, Norm attaining operator, The Bishop-Phelps-Bollob\'as theorem, Uniform convexity, Micro-transitivity, Bilinear forms}
\begin{document}
	
\begin{abstract}
The main purpose of this paper is to study Bishop-Phelps-Bollob\'as type properties on $c_0$ sum of Banach spaces. Among other results, we show that the pair $(c_0(X),Y)$ has the Bishop-Phelps-Bollob\'as property (in short, BPBp) for operators whenever $X$ is uniformly convex and $Y$ is (complex)  uniformly convex. We also prove that the pair $(c_0(X),c_0(X))$ has the BPBp for bilinear forms whenever $X$ is both uniformly convex and uniformly smooth. These extend the previously known results that $(c_0,Y)$ has the BPBp for operators whenever $Y$ is uniformly convex and $(c_0,c_0)$ has the BPBp for bilinear forms. We also obtain some results on a local BPBp which is called $\mathbf{L}_{p,p}$ for both operators and bilinear forms.
\end{abstract}

\maketitle

\section{Introduction}
The celebrated Bishop-Phelps theorem \cite{BP} states that every functional on a Banach space can be approximated by norm attaining ones. This astonishing result inspired many authors, and the denseness of norm attaining functions became a fruitful area. In 1970, B. Bollob\'as \cite{Bol} strengthened it by discovering a quantitative version that every functional and its almost norming points can be approximated by a norm attaining functional and its norm attaining point. After that Acosta, Aron, Garc\'ia and Maestre \cite{AAGM} defined a new notion for a pair of Banach spaces, which is called the Bishop-Phelps-Bollob\'as property for operators, and provided many notable results. In the past few years, many researchers found additional conditions for pairs of spaces having such property, and this is the main purpose of the present paper. To discuss them in details, we begin with introducing basic terminologies to remind.

We write $X$ and $Y$ for Banach spaces (over the scalar field $\mathbb{K}=\mathbb{R}$ or $\mathbb{C}$), and the notions $B_X$ and $S_X$ denote respectively the unit ball and unit sphere of $X$. We write by $X^*$ for the topological dual space of $X$. The space of all bounded linear operators from $X$ into $Y$ equipped with the operator norm will be denoted by $\Lin(X,Y)$, and the range space is omitted when $X=Y$. An operator $T \in \Lin(X,Y)$ is said to \emph{attain its norm} at $x_0 \in S_X$ if $\|Tx_0\|=\|T\|$, and in this case we write $T \in \NA(X,Y)$. We now introduce the main definition of our topic that we already mentioned.

\begin{definition}\cite[Definition 1.1]{AAGM}
A pair of Banach spaces $(X,Y)$ is said to have the \emph{Bishop-Phelps-Bollob\'as property} (in short, BPBp) \emph{for operators} whenever for each $\eps>0$ there exists $\eta(\eps)>0$ such that if $T \in S_{\Lin(X,Y)}$ and $x_0 \in S_X$ satisfy $\|Tx_0\|>1-\eta(\eps)$, then there exist $S \in S_{\Lin(X,Y)}$ and $z_0 \in S_{X}$ with
$$
\|Sz_0\|=1, \quad \|S-T\|<\eps \quad \text{and} \quad \|z_0-x_0\|<\eps.
$$
\end{definition}

In \cite{AAGM}, the authors proved that the pair $(X,Y)$ has the BPBp for operators when $X$ and $Y$ are both finite-dimensional, when $Y$ has property $\beta$, or when $X=\ell_1$ and $Y$ has so-called approximate hyperplane series property. Especially, they also proved that $(\ell_\infty^n,Y)$ has the BPBp for operators when $Y$ is \emph{uniformly convex}. The uniform convexity is a condition for a space $Y$ which states that for every $\eps>0$ there exists $\delta(\eps)>0$ such that $1 - \| \frac{x+y}{2} \| \geq \delta(\eps)$ whenever $\|x-y\| \geq \eps$ and $x,y \in B_Y$. In this case, we denote the infimum value taken among such $\delta(\eps)$ by the \emph{modulus of convexity} $\delta_Y(\eps)$ of $Y$. Kim \cite{K} improved this result once more by showing that $(c_0,Y)$ has the BPBp for operators when $Y$ is uniformly convex. While some conditions may get loosened on here, as Acosta \cite{A} showed that $(C_0(K),Y)$ has the BPBp for operators when $K$ is a locally compact Hausdorff space and $Y$ is $\C$-uniformly convex. A Banach space $Y$ is said to be \emph{$\C$-uniformly convex} if the \emph{modulus of $\C$-convexity}
$$
\delta_\C(\eps) := \inf_{x,y \in S_Y} \left\{ \sup_{|\lambda| = 1} \|x + \lambda \eps y\|-1 \right\}
$$
is strictly positive whenever $\eps>0$. Usually, the $\C$-uniform convexity is defined for complex Banach spaces and the uniform convexity implies the $\C$-uniform convexity. It is notable that $\C$-uniform convexity can also be defined for real Banach spaces, but this coincides with the uniform convexity. It is well known that every non-trivial complex $L_1(\mu)$ is $\C$-uniformly convex but the real one is not uniformly convex \cite{G}.  There are many other remarkable results on the BPBp for operators, which we refer to \cite{A-survey} for those who are interested.

Recently Dantas, Kim, Lee and Mazzitelli \cite{DKLM} introduced new versions of the BPBp, which they call the local BPBp for operators. We introduce here a specific type as follows.

\begin{definition}\cite[Definition 2.1]{DKLM}
A pair of Banach spaces $(X,Y)$ is said to have the $\mathbf{L}_{p,p}$ whenever for each $\eps>0$ and $x_0 \in S_X$ there exists $\eta(\eps,x_0)>0$ such that if $T \in S_{\Lin(X,Y)}$ satisfies $\|Tx_0\|>1-\eta(\eps,x_0)$, then there exists $S \in S_{\Lin(X,Y)}$ with
$$
\|Sx_0\|=1 \quad \text{and} \quad \|S-T\|<\eps.
$$
\end{definition}

Although it is not known whether the $\mathbf{L}_{p,p}$ implies the BPBp for operators or not, still it is a very intriguing convention as the $\mathbf{L}_{p,p}$ characterizes the strong subdifferentiability of a Banach space $X$ (see \cite[Theorem 2.3]{DKLM}). It is worth to remark that $c_0$ is one of typical example of infinite dimensional Banach space with a strong subdifferentiable norm, and it is shown in \cite{DKLM} that $(c_0,Y)$ has the $\mathbf{L}_{p,p}$ when $Y$ is uniformly convex.

These projects have been extended to the analogous type of properties for bilinear forms. One may define that a bilinear form $T \in \Lin^2(X \times Y)$ \emph{attains its norm} at $(x_0,y_0) \in S_X$ if $|T(x_0,y_0)|=\|T\|$, and the BPBp for bilinear form is defined as follows.
 \begin{definition}\cite{CS, DKLM2} A pair of Banach spaces $(X,Y)$ is said to have
\begin{enumerate}
\item[\textup{(a)}] the \emph{Bishop-Phelps-Bollob\'as property} (in short, BPBp) \emph{for bilinear forms} whenever for each $\eps>0$ there exists $\eta(\eps)>0$ such that if $T \in S_{\Lin^2(X \times Y)}$ and $(x_0,y_0) \in S_{X \times Y}$ satisfy that $|T(x_0,y_0)|>1-\eta$, then there exist $S \in S_{\Lin^2(X \times Y)}$ and $(u_0,v_0) \in S_{X \times Y}$ with
$$
|S(u_0,v_0)|=1, \quad \|u_0-x_0\|<\eps, \quad \|v_0-y_0\|<\eps \quad \text{and} \quad \|S-T\|<\eps.
$$
\item[\textup{(b)}] the $\mathbf{L}_{p,p}$ \emph{for bilinear forms} whenever for each $\eps>0$ and $(x_0,y_0) \in S_{X \times Y}$ there exists $\eta(\eps,x_0,y_0)>0$ such that if $T \in S_{\Lin^2(X \times Y)}$ satisfies $|T(x_0,y_0)|>1-\eta$, then there exists $S \in S_{\Lin^2(X \times Y)}$ with
$$
|S(x_0,y_0)|=1, \quad \text{and} \quad \|S-T\|<\eps.
$$
\end{enumerate}
\end{definition}
It is possible to equalize the space of bilinear forms $\Lin^2(X \times Y)$ and the space of bounded linear operators $\Lin(X, Y^*)$ (resp. $\Lin(Y,X^*)$) via an isometrically isomorphic correspondence between $T \in \Lin^2(X \times Y)$ and $L_T \in \Lin(X,Y^*)$ by $T(x,y) = (L_Tx)(y)$ (resp. $R_T \in \Lin(Y,X^*)$ by $T(x,y) = (R_Ty)(x)$). This clearly shows that the BPBp for bilinear forms on $(X,Y)$ implies the BPBp for operators on the pair $(X,Y^*)$. However, it is interesting that the the converse is not true in general \cite{CS}. We refer to \cite{ABGM} for more information on the pairs satisfying or failing the BPBp for bilinear forms, and \cite{DKLM2} for more information of the $\mathbf{L}_{p,p}$ for bilinear forms.

The main object we concern in this article is $c_0(X)= \bigl[ \oplus_{i=1}^\infty X \bigr]_{c_0}$, the $c_0$-sum of a Banach space $X$. It is shown in \cite{ACKLM} that the BPBp for operators is stable under $c_0$-sum of range spaces, but it is not known whether the reciprocal result on the domain space remains true. We note here just to be safe that some straightforward stability results can be obtained such as the pair $(c_0(X),C(K))$ when $X$ is Asplund and $K$ is a compact Hausdorff space due to \cite[Theorem 3.6]{CGK} as $c_0(X)$ is also Asplund (see \cite[p.213]{DU}, for instance). In this paper, we are interested in the operators from a Banach space of $c_0$-sum into a $\C$-uniformly convex space and the bilinear forms on the pair of Banach spaces of $c_0$-sums. In Section \ref{section:operator}, we first show that when $X$ is uniformly convex and $Y$ is  $\C$-uniformly convex, the pair $(c_0(X),Y)$ has the BPBp for operators which extends the result in \cite{K}. We also show that the pair $(c_0(X),Y)$ has the $\mathbf{L}_{p,p}$ when $X$ is micro-transitive and $Y$ is  $\C$-uniformly convex, extending \cite[Theorem 2.12]{DKLM}. In Section \ref{section:bilinear-form}, we move on to bilinear forms and prove that the pair $(c_0(X),c_0(X))$ has the BPBp for bilinear forms when $X$ is both uniformly convex and uniformly smooth. This is a generalization of the result in \cite{KLM} that $(c_0,c_0)$ has the BPBp for bilinear forms. Similarly to the aforementioned result, we find that the pair $(c_0(X),c_0(X))$ has the $\mathbf{L}_{p,p}$ for bilinear forms when $X$ is micro-transitive.

\section{The Bishop-Phelps-Bollob\'as property for operators on the $c_0$-sum}\label{section:operator}

In this section, we find new pairs $(c_0(X),Y)$ having the BPBp for operators. All the statements are given for Banach spaces based on $\C$ even though the real versions also hold. We omit the proofs of them since they can be proved with same arguments which are easier and simpler than the complex cases.  Before presenting the results, we need to introduce preliminary lemmas which will be used frequently throughout the article. In the following lemma, a convex series $(\alpha_i)_{i \in \N}\subset B_\mathbb{R}$ represents a sequence of non-negative numbers whose sum is less than or equal to $1$.

\begin{lemma}\cite[Lemma 3.3]{AAGM}\label{lemma:convex-series-estimate}
Let a number $0<\eta<1$ and a sequence $(z_i)_{i \in \N} \subset B_\mathbb{K}$ be given. If a convex series $(\alpha_i)_{i \in \N}\subset B_\mathbb{R}$ satisfies
$$
\re \sum_{i \in \N} \alpha_i z_i > 1- \eta,
$$
then we have for every $0<\eta'<1$ that
$$
\sum_{i \in A} \alpha_i > 1- \frac{\eta}{\eta'},
$$
where $A = \{ i \in \N \colon \re z_i > 1-\eta' \}$.
\end{lemma}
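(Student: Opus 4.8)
The plan is to split the series according to the set $A$ and to estimate the two pieces separately, using the only two facts available about the numbers $\re z_i$: that $\re z_i \le |z_i| \le 1$ holds for every index (since $z_i \in B_\mathbb{K}$), and that $\re z_i \le 1-\eta'$ holds precisely off $A$ by the definition of $A$.

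First I would set $a := \sum_{i \in A}\alpha_i$ and record that, since $(\alpha_i)$ is a convex series, $\sum_{i \notin A}\alpha_i = \sum_{i\in\N}\alpha_i - a \le 1 - a$, with every $\alpha_i \ge 0$. Then, starting from the hypothesis and bounding $\re z_i$ by $1$ on $A$ and by $1-\eta'$ on the complement, I would obtain
$$
1 - \eta < \sum_{i\in A}\alpha_i\,\re z_i + \sum_{i\notin A}\alpha_i\,\re z_i \le a + (1-\eta')\sum_{i\notin A}\alpha_i .
$$

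The key step is that, because $0 < \eta' < 1$ forces the coefficient $1-\eta'$ to be nonnegative, I may replace $\sum_{i\notin A}\alpha_i$ by its upper bound $1-a$ without reversing the inequality. This is the one place where convexity of the series (rather than an exact normalization $\sum_i\alpha_i = 1$) is genuinely used, and it yields
$$
1 - \eta < a + (1-\eta')(1-a) = 1 - \eta'(1-a),
$$
whence $\eta'(1-a) < \eta$, that is $1 - a < \eta/\eta'$, which is exactly the desired conclusion $\sum_{i\in A}\alpha_i > 1 - \eta/\eta'$.

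I do not anticipate a genuine obstacle: the whole argument is a one-line convexity estimate once the index set has been split. The only point demanding a little care is to avoid tacitly assuming $\sum_{i\in\N}\alpha_i = 1$, since a convex series may have total mass strictly below $1$; handling this correctly is precisely what the nonnegativity of $1-\eta'$ secures in the substitution above.
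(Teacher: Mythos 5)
Your proof is correct: the splitting of the index set, the bounds $\re z_i\le 1$ on $A$ and $\re z_i\le 1-\eta'$ off $A$, and the substitution $\sum_{i\notin A}\alpha_i\le 1-a$ (valid because $1-\eta'\ge 0$) give exactly $1-\eta<1-\eta'(1-a)$ and hence the claim. The paper cites this lemma from \cite{AAGM} without reproducing a proof, and your argument is precisely the standard one used there, including the careful handling of the case $\sum_i\alpha_i<1$.
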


Recall that $\ell_\infty^A(X)$ ($A\subset\mathbb{N}$) denotes the space $\bigl[ \oplus_{i \in A} X \bigr]_{\ell_\infty}$, and $\ell_\infty^n(X):=\ell_\infty^{\{1,..,n\}}(X)$. 

\begin{lemma}\cite[Lemma 2.3]{A}\label{lemma:uniformly-convex-projection}
Let $X$ be a Banach space and $Y$ be a $\C$-uniformly convex Banach space with modulus of $\C$-convexity $\delta_\C$. For a fixed $\eps>0$, if $T \in B_{\Lin(c_0(X),Y)}$ and $A \subset \N$ satisfy that $\|TP_A\| \geq 1- \frac{\delta_\C(\eps)}{1+\delta_\C(\eps)}$, then $\|T(I-P_A)\| \leq \eps$ where $P_A \colon c_0(X) \to \ell_\infty^A(X) \subset c_0(X)$ is a projection on the components in $A$. Analogously, if $T \in S_{\Lin(\ell_\infty^n(X),Y)}$ and $A \subset \{1, \ldots, n \}$ satisfy that $\|TP_A\| \geq 1- \frac{\delta_\C(\eps)}{1+\delta_\C(\eps)}$, then $\|T(I-P_A)\| \leq \eps$.
\end{lemma}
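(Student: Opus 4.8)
The plan is to bound $\|T(I-P_A)\|$ by estimating $\|Tz\|$ for a single element $z$ supported off $A$. Write $a := \|TP_A\| = \sup\{\|Tx\| : x \in B_{c_0(X)},\ \supp x \subset A\}$ and note that $\|T(I-P_A)\| = \sup\{\|Tz\| : z \in B_{c_0(X)},\ \supp z \subset \N \setminus A\}$; thus it suffices to fix $z \in B_{c_0(X)}$ with $\supp z \subset \N \setminus A$ and show $\|Tz\| \le \eps$. Set $v := Tz$ and suppose, toward a contradiction, that $\|v\| > \eps$.

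First I would exploit the disjoint-support structure of the $c_0$-sum. If $x \in B_{c_0(X)}$ has $\supp x \subset A$, then for every unimodular scalar $\lambda$ the vectors $x$ and $\lambda z$ live on disjoint blocks, so $\|x + \lambda z\|_{c_0(X)} = \max\{\|x\|,\|z\|\} \le 1$. Since $\|T\| \le 1$, applying $T$ gives, for every $|\lambda| = 1$,
\[
\|Tx + \lambda v\| = \|T(x + \lambda z)\| \le 1 .
\]
Writing $u := Tx$ (and assuming $u \ne 0$), this says $\sup_{|\lambda|=1}\|u + \lambda v\| \le 1$ for every admissible $x$.

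Next I feed this into the modulus of $\C$-convexity. Factoring $\|u\|$ out and applying the definition of $\delta_\C$ to the unit vectors $u/\|u\|$ and $v/\|v\|$ with parameter $\|v\|/\|u\|$ yields
\[
1 \ge \sup_{|\lambda|=1}\|u + \lambda v\| \ge \|u\|\Bigl(1 + \delta_\C\bigl(\|v\|/\|u\|\bigr)\Bigr),
\]
so $\delta_\C(\|v\|/\|u\|) \le (1-\|u\|)/\|u\|$. Because $\|u\| \le a \le 1$ and $\|v\| > \eps$, the argument obeys $\|v\|/\|u\| \ge \|v\| > \eps$, which lets me invoke the standard convexity property that $t \mapsto \delta_\C(t)/t$ is non-decreasing on $(0,\infty)$: this follows from the convexity of $t \mapsto \sup_{|\lambda|=1}\|p + \lambda t q\|$ and its value $1$ at $t=0$, passed through the infimum over $p,q \in S_Y$. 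Hence $\delta_\C(\|v\|/\|u\|) \ge \frac{\|v\|}{\eps \|u\|}\,\delta_\C(\eps)$, and combining with the previous bound and clearing $\|u\|$ gives $\|v\| \le \eps(1-\|u\|)/\delta_\C(\eps)$.

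Finally I would let $\|u\| = \|Tx\|$ increase to $a = \|TP_A\|$ (a supremum that need not be attained, so the estimate must be taken in the limit rather than at a maximizer). Since the hypothesis gives $a \ge 1 - \frac{\delta_\C(\eps)}{1+\delta_\C(\eps)} = \frac{1}{1+\delta_\C(\eps)}$, i.e. $1 - a \le \frac{\delta_\C(\eps)}{1+\delta_\C(\eps)}$, passing to the supremum over such $x$ yields
\[
\|v\| \le \frac{\eps(1-a)}{\delta_\C(\eps)} \le \frac{\eps}{1+\delta_\C(\eps)} < \eps ,
\]
contradicting $\|v\| > \eps$. Hence $\|Tz\| \le \eps$ for every such $z$, and the supremum over $z$ gives $\|T(I-P_A)\| \le \eps$; the $\ell_\infty^n(X)$ statement follows verbatim, the only properties used being the max-norm on disjoint blocks and $\|T\| \le 1$ (here $\|T\| = 1$). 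I expect the crux to be this final quantitative step: monotonicity of $\delta_\C$ alone is too weak, and the decisive input is the convexity estimate $\delta_\C(t) \ge (t/\eps)\,\delta_\C(\eps)$ for $t \ge \eps$ together with the strict positivity $\delta_\C(\eps) > 0$.
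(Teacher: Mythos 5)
Your proof is correct. The paper itself gives no proof of this lemma --- it cites \cite[Lemma 2.3]{A} and states that the adaptation to $c_0(X)$ and $\ell_\infty^n(X)$ is routine --- and your argument is essentially the standard one from that source: disjoint blocks in the sup-norm sum give $\sup_{|\lambda|=1}\|Tx+\lambda Tz\|\leq 1$, and the modulus of $\C$-convexity then forces $\|Tz\|$ to be small. You also correctly identify the one genuinely delicate point, namely that monotonicity of $\delta_\C$ alone only yields $\|TP_A\|\leq \frac{1}{1+\delta_\C(\eps)}$ (no contradiction at equality), whereas the convexity-based slope estimate $\delta_\C(t)/t$ non-decreasing gives the strict improvement $\|Tz\|\leq \frac{\eps}{1+\delta_\C(\eps)}<\eps$.
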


The statements are different from the original ones, but we omit the proofs since they are just slight modifications of them. Especially, we use Lemma \ref{lemma:convex-series-estimate} in order to get the following estimation which will be frequently used.

\begin{lemma}\label{lemma:convex-series-estimate2} Let $X$ and $Y$ be Banach spaces and $0<\eta<1$ be given. Assume that $T \in S_{\Lin(c_0(X),Y)}$, $y_0^* \in S_{Y^*}$ and $x_0 \in S_{c_0(X)}$ satisfy that
$$y_0^*(Tx_0) = \|Tx_0\| > 1-\eta.$$
Then, for $0<\eta'<1$, we have
$$
 \sum_{i \in A} \|(T^*y_0^*)(i)\| > 1- \frac{\eta}{\eta'}.
$$
  where $A := \left\{i \in N \colon \re \left[(T^*y_0^*)(i) \right] (x_0(i)) > \left(1-\eta'\right)\|(T^*y_0^*)(i)\|\right\}$.

In particular, 
$$\re \sum_{i\in A}\left[(T^*y_0^*)(i)\right] (x_0(i))>\left(1- \frac{\eta}{\eta'}\right)\left(1-\eta'\right).$$
\end{lemma}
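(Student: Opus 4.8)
The plan is to reduce the statement to a single application of Lemma \ref{lemma:convex-series-estimate}, the only real work being the correct normalization. The starting point is the identification $(c_0(X))^* = \ell_1(X^*)$, under which $T^*y_0^*$ is a sequence $\bigl((T^*y_0^*)(i)\bigr)_{i\in\N}$ of elements of $X^*$ with
$$
\sum_{i\in\N}\|(T^*y_0^*)(i)\| = \|T^*y_0^*\| \leq \|T^*\|\,\|y_0^*\| = \|T\|\,\|y_0^*\| = 1.
$$
Writing the duality pairing coordinatewise then gives
$$
\re\sum_{i\in\N}\bigl[(T^*y_0^*)(i)\bigr](x_0(i)) = \re\,(T^*y_0^*)(x_0) = \re\,y_0^*(Tx_0) = \|Tx_0\| > 1-\eta,
$$
so the hypothesis already has the exact shape of the assumption in Lemma \ref{lemma:convex-series-estimate}.

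Next I would feed this into the convex-series lemma with the right choice of data, and this is the only step that carries any content. Set $\alpha_i := \|(T^*y_0^*)(i)\|$; by the displayed bound $(\alpha_i)_{i\in\N}$ is a convex series. For each $i$ with $\alpha_i \neq 0$ set
$$
z_i := \frac{\bigl[(T^*y_0^*)(i)\bigr](x_0(i))}{\|(T^*y_0^*)(i)\|},
$$
and $z_i := 0$ otherwise. The one thing that needs verifying is $z_i \in B_\mathbb{K}$: this follows from $\bigl|[(T^*y_0^*)(i)](x_0(i))\bigr| \leq \|(T^*y_0^*)(i)\|\,\|x_0(i)\|$ together with $\|x_0(i)\| \leq \|x_0\| = 1$. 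With these choices $\alpha_i z_i = [(T^*y_0^*)(i)](x_0(i))$ for every $i$, so the inequality above reads $\re\sum_{i\in\N}\alpha_i z_i > 1-\eta$.

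Applying Lemma \ref{lemma:convex-series-estimate} with the given $\eta'$ then yields $\sum_{i\in A'}\alpha_i > 1 - \eta/\eta'$, where $A' = \{i\in\N : \re z_i > 1-\eta'\}$. The final task is to recognize that $A' = A$: when $\alpha_i \neq 0$ the condition $\re z_i > 1-\eta'$ is precisely $\re[(T^*y_0^*)(i)](x_0(i)) > (1-\eta')\|(T^*y_0^*)(i)\|$, while for $\alpha_i = 0$ both definitions exclude $i$ (the defining inequality of $A$ then reads $0 > 0$). Substituting $\alpha_i = \|(T^*y_0^*)(i)\|$ gives the first conclusion. For the ``in particular'' clause I would simply sum the defining inequality of $A$ over $i\in A$ to get $\re\sum_{i\in A}[(T^*y_0^*)(i)](x_0(i)) > (1-\eta')\sum_{i\in A}\|(T^*y_0^*)(i)\|$, and then insert the bound $\sum_{i\in A}\|(T^*y_0^*)(i)\| > 1 - \eta/\eta'$ just obtained. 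There is no genuine obstacle here; the only thing to be careful about is the bookkeeping for indices with $\|(T^*y_0^*)(i)\| = 0$, which is harmless but should be handled explicitly so that the identification $A' = A$ and the normalization of $z_i$ are both clean.
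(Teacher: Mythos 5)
Your proposal is correct and follows essentially the same route as the paper: rewrite $y_0^*(Tx_0)=\sum_i\|(T^*y_0^*)(i)\|\bigl[\tfrac{(T^*y_0^*)(i)}{\|(T^*y_0^*)(i)\|}\bigr](x_0(i))$ as a convex combination and apply Lemma \ref{lemma:convex-series-estimate}. The paper's proof is terser (it handles the zero-norm indices by restricting to the set $N$ and omits the explicit verification of the ``in particular'' clause), but the argument is the same.
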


\begin{proof}
We define $N\subset \N$ by
$$
N:=\{i \in \N \colon \|(T^*y_0^*)(i)\|\neq 0\}.
$$
Then, it is clear that 
$$A = \left\{i \in N \colon \re \left[ \frac{(T^*y_0^*)(i)}{\|(T^*y_0^*)(i)\|} \right] (x_0(i)) > 1-\eta'\right\}.
$$
Since 
$$
y_0^*(Tx_0)= \sum_{i=1}^\infty \left[ (T^*y_0^*)(i) \right] (x_0(i)) = \sum_{i\in N} \|(T^*y_0^*)(i)\|\left[ \frac{(T^*y_0^*)(i)}{\|(T^*y_0^*)(i)\|} \right] (x_0(i)),
$$
by Lemma \ref{lemma:convex-series-estimate}, we have that
$$
 \sum_{i \in A} \|(T^*y_0^*)(i)\| > 1- \frac{\eta}{\eta'}.
$$
\end{proof}

We are now ready to present the main theorem of this section.

\begin{theorem}\label{theorem:c_0-uniformly-convex-BPBp}
Let $X$ be a uniformly convex Banach space and $Y$ be a $\C$-uniformly convex Banach space. Then, the pair $(c_0(X),Y)$ has the BPBp for operators.
\end{theorem}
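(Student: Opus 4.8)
The plan is to push everything onto finitely many coordinates by a localization argument, and only there to correct the operator. Fix $\eps>0$, and suppose $T\in S_{\Lin(c_0(X),Y)}$ and $x_0\in S_{c_0(X)}$ satisfy $\|Tx_0\|>1-\eta$ for a small $\eta=\eta(\eps)$ to be chosen at the very end. First I would take a norming functional $y^*\in S_{Y^*}$ with $y^*(Tx_0)=\|Tx_0\|$ and apply Lemma~\ref{lemma:convex-series-estimate2} with an auxiliary $\eta'$: writing $c_i:=\|(T^*y^*)(i)\|$ and $f_i:=(T^*y^*)(i)/c_i\in S_{X^*}$ for the indices with $c_i\neq0$, this yields a set $A$ with $\re f_i(x_0(i))>1-\eta'$ on $A$ and $\sum_{i\in A}c_i>1-\eta/\eta'$. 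Since $\sum_i c_i=\|T^*y^*\|\leq1$ the series converges, so I can fix a \emph{finite} $A_0\subset A$ retaining almost all of this mass. Then $\|TP_{A_0}\|\geq\re y^*(TP_{A_0}x_0)=\re\sum_{i\in A_0}[(T^*y^*)(i)](x_0(i))$ is close to $1$, and Lemma~\ref{lemma:uniformly-convex-projection} gives $\|T(I-P_{A_0})\|\leq\eps'$ with $\eps'$ as small as desired. This is the step where the $\C$-uniform convexity of $Y$ is used, and it reduces the problem to the finite sum $\ell_\infty^{A_0}(X)$.

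Next I would produce the new point $z_0$ using the uniform convexity of $X$. For each $i\in A_0$ the inequality $\re f_i(x_0(i))>1-\eta'$ forces $\|x_0(i)\|>1-\eta'$, so $f_i$ almost norms $v_i:=x_0(i)/\|x_0(i)\|\in S_X$. Since a uniformly convex space is reflexive and strictly convex, $f_i$ attains its norm at a \emph{unique} $u_i\in S_X$; applying the definition of $\delta_X$ to $\tfrac12(u_i+v_i)$ (whose image under $\re f_i$ exceeds $1-\eta'/2$) shows $\|u_i-v_i\|$ is small once $\eta'$ is small. Setting $z_0(i)=u_i$ for $i\in A_0$ and $z_0(i)=x_0(i)$ otherwise, I get $z_0\in S_{c_0(X)}$ (the supremum is attained on $A_0$) with $\|z_0-x_0\|=\sup_{i\in A_0}\|u_i-x_0(i)\|\leq\sup_{i\in A_0}\bigl(\|u_i-v_i\|+(1-\|x_0(i)\|)\bigr)<\eps$.

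It then remains to perturb $TP_{A_0}$ into a norm-one operator $S$ that attains its norm at $z_0$ and satisfies $\|S-T\|<\eps$. The natural candidate is to keep $TP_{A_0}$ and raise the $y^*$-mass carried by $z_0$ to exactly $1$: with weights $\lambda_i\geq0$, $\sum_{i\in A_0}\lambda_i=1$, the functional $\phi(x):=\sum_{i\in A_0}\lambda_i f_i(x(i))$ lies in $S_{(c_0(X))^*}$ with $\phi(z_0)=1$, and one modifies $TP_{A_0}$ by an additive rank-one correction supported on $A_0$ and carried by $\phi$, so that the missing mass $1-\sum_{i\in A_0}c_i$ is supplied at $z_0$. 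Writing $w:=TP_{A_0}z_0$, one has $\|w\|\leq1$ and $y^*(w)=\sum_{i\in A_0}c_i$ close to $1$, so $z_0$ is already an almost-norming point; by construction the correction has norm at most $\|T(I-P_{A_0})\|+\bigl(1-\sum_{i\in A_0}c_i\bigr)$, which is small, so closeness to $T$ is not the issue.

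The main obstacle is forcing \emph{exact} norm attainment while keeping $\|S\|=1$ on the nose. This cannot be sidestepped by a convex combination with a rank-one attaining operator: such a combination keeps $\|S\|\leq1$ but, since $\|w\|<1$ in general, yields $\|Sz_0\|<1$ strictly, so exact attainment genuinely requires an additive correction. That correction, however, only gives $\|S\|\leq1+o(1)$ from the triangle inequality, and in the supremum norm of $c_0(X)$ different coordinates can conspire to push the operator norm above $1$, while renormalising $S/\|S\|$ would destroy the equality $\|Sz_0\|=1$. Overcoming this is exactly where the two convexity hypotheses must act together: the uniform convexity of $X$ guarantees, via a further application of Lemma~\ref{lemma:convex-series-estimate}, that any $x$ with $|\phi(x)|$ near $1$ has each active coordinate $x(i)$ near $u_i$, hence $x$ near $z_0$ and $TP_{A_0}x$ near $w$, so that on the region where the correction matters $S$ is already essentially norming; meanwhile the $\C$-uniform convexity of $Y$ both underlies the truncation and controls the image in the complementary dual directions, exactly as in Acosta's norm-attainment construction for $C_0(K)$ in \cite{A}. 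I expect that a careful case analysis on the size of $|\phi(x)|$, with $\eta$, $\eta'$, the truncation error $\eps'$ and the weights $\lambda_i$ all tuned against the moduli $\delta_X$ and $\delta_\C$, is what yields $\|Sx\|\leq1$ for every $x\in B_{c_0(X)}$. The localization and the construction of $z_0$ are routine given the preliminary lemmas; this simultaneous control of the operator norm and of exact norm attainment is the technical heart of the theorem, with the uniform convexity of $X$ being the genuinely new ingredient on the domain side.
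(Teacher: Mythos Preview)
Your localization step is correct and matches the paper: using Lemma~\ref{lemma:convex-series-estimate2} together with Lemma~\ref{lemma:uniformly-convex-projection} to pass to a finite $A_0\subset\N$ with $\|T(I-P_{A_0})\|$ small is exactly how the argument begins. The genuine gap is in the second half, and you have identified it yourself: you do not actually construct an $S\in S_{\Lin(c_0(X),Y)}$ with $\|Sz_0\|=1$. The rank-one additive correction you describe, followed by a ``case analysis on the size of $|\phi(x)|$'', is only a hope; nothing you wrote prevents the corrected operator from having norm strictly larger than~$1$, and renormalising would destroy $\|Sz_0\|=1$ as you note. Acosta's construction in \cite{A} that you invoke works because the domain coordinates are \emph{scalars}, so one can explicitly rotate and reweight; with $X$ infinite-dimensional there is no analogous pointwise surgery producing exact norm attainment.

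The paper avoids this obstacle by a completely different mechanism. Since $X$ is uniformly convex it is reflexive, so $\ell_\infty^{A_0}(X)$ has the Radon--Nikod\'ym property; by Bourgain's theorem \cite{Bou} the norm-attaining operators are dense in $\Lin(\ell_\infty^{A_0}(X),Y)$. One does not prescribe the attaining point in advance. Instead one first perturbs $\hat T$ to
\[
R(z)=\hat T z+\eta\,y_1^*(\hat T z)\,\frac{\hat T\hat x_0}{\|\hat T\hat x_0\|},
\]
a bump that raises the norm precisely at points $z$ with $|y_1^*(\hat T z)|$ large; then Bourgain gives a nearby $Q$ with $\|Q\|=\|R\|$ attaining at some $w_0$. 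Comparing $\|R\hat x_0\|$ with $\|R w_0\|$ forces $\re y_1^*(\hat T w_0)$ close to $1$, whence $\re y_1^*\bigl(\hat T\tfrac{w_0+\hat x_0}{2}\bigr)$ is close to $1$; a second application of Lemma~\ref{lemma:convex-series-estimate2} then gives a set $B\subset A_0$ on which $\|\tfrac{w_0+\hat x_0}{2}(i)\|$ is close to $1$, and \emph{only now} does the uniform convexity of $X$ enter, yielding $\|w_0(i)-\hat x_0(i)\|<\eps/2$ on $B$. The final $S$ is built from $Q$ (not from $\hat T$) by swapping the coordinates in $A_0\setminus B$ via a contraction $U$ sending $\hat x_0(i)$ to $w_0(i)$, so that $S$ attains at the mixed point $z_0$ whose $B$-coordinates are $w_0(i)$; the norm control $\|\widetilde S\|\leq\|Q\|$ is automatic because $U$ is a contraction. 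In short: the paper lets Bourgain manufacture the norm-attaining operator and then proves its attaining point is close, whereas you try to fix the point first and build the operator by hand---and that second route is exactly where your argument stalls.
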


\begin{proof}
Let $0<\eps<1$ be given. Set $\eta(\eps) := \min \left\{ \frac{\eps}{16}, \frac{\delta_{\mathbb{C}}(\frac{\eps}{16})}{1+\delta_{\mathbb{C}}(\frac{\eps}{16})}, \delta_X(\frac{\eps}{2})\right\}$ where $\delta_X$ is the modulus of convexity of $X$ and $\delta_{\mathbb{C}}$ is the modulus of $\C$-convexity of $Y$. Assume that $T \in S_{\Lin(c_0(X),Y)}$ and $x_0 \in S_{c_0(X)}$ satisfy that
$$
\|Tx_0\| > 1-\frac{\eta^6}{64}.
$$
Choose $y_0^* \in S_{Y^*}$ so that $y_0^*(Tx_0) = \|Tx_0\| > 1-\frac{\eta^6}{64}$ and define subsets $N,A \subset \N$ respectively by
$$
 A := \left\{i \in N \colon \re \left[(T^*y_0^*)(i) \right] (x_0(i)) > \left(1-\frac{\eta^3}{8}\right)\|(T^*y_0^*)(i)\|\right\}.
$$

By Lemma \ref{lemma:convex-series-estimate2}, we have that
$$
 \sum_{i \in A} \|(T^*y_0^*)(i)\| > 1- \frac{\eta^3}{8}.
$$
and so Lemma \ref{lemma:uniformly-convex-projection} shows that
$$
\|TP_A-T\| < \frac{\eps}{16}.
$$
The canoncial restriction $\hat{T} \in B_{\Lin(\ell_\infty^A(X),Y)}$ of $T$ and an element $\hat{x}_0 = (\hat{x}_0(i))_{i \in A} = \left(\frac{x_0(i)}{\|x_0(i)\|}\right)_{i \in A} \in S_{\ell_\infty^A(X)}$ satisfy that
$$
\bigl\|\hat{T}\hat{x}_0 \bigr\| \geq \re \sum_{i \in A} \|(T^*y_0^*)(i)\|\left[ \frac{(T^*y_0^*)(i)}{\|(T^*y_0^*)(i)\|} \right] (\hat{x}_0(i))  >1-\frac{\eta^3}{4}\quad \text{and} \quad \|\hat{x}_0(i)-x_0(i)\| <\frac{\eta^3}{8} \quad \text{for } i \in A.
$$

Choose $y_1^* \in S_{Y^*}$ so that $\re y_1^*(\hat{T}\hat{x}_0) = \|\hat{T}\hat{x}_0\|$ and define $R \in \Lin(\ell_\infty^A(X),Y)$ by
$$
R(z) := \hat{T}z + \eta y_1^*(\hat{T}z) \frac{\hat{T}\hat{x}_0}{\bigl\| \hat{T}\hat{x}_0 \bigr\|}
$$
for $z \in \ell_\infty^A(X)$. As $\ell_\infty^A(X)$ has the RNP, by Bourgain \cite{Bou} there exists $Q \in \NA(\ell_\infty^A(X),Y)$ such that $Q$ attains its norm at $w_0 \in S_{\ell_\infty^A(X)}$, $\|Q\|=\|R\|$ and $\|Q-R\|<\frac{\eta^3}{4}$. We here deduce that
$$
1-\frac{\eta^3}{4} + \eta\left(1-\frac{\eta^3}{4}\right) \leq \|R\hat{x}_0\| \leq \|R\|=\|Qw_0\| \leq \|Qw_0-Rw_0\|+\|Rw_0\| \leq \frac{\eta^3}{4} + 1 + \eta \bigl|y_1^*(\hat{T}w_0)\bigr|.
$$
If we rotate $w_0$ if it is necessary, we may assume $\bigl|y_1^*(\hat{T}w_0)\bigr| = \re y_1^*(\hat{T}w_0)$, and we obtain from above that
$$
\re y_1^*(\hat{T}w_0) \geq 1 - \frac{\eta^2}{2} - \frac{\eta^3}{4} \geq 1-\eta^2.
$$
It follows that
$$
\re y_1^*\left( \hat{T} \left( \frac{w_0+\hat{x}_0}{2} \right) \right) \geq 1 - \frac{\eta^2 + \frac{\eta^3}{4}}{2} \geq 1- \eta^2.
$$
Define a subset $B \subset A$ by
$$
B := \left\{ i \in A \colon \re \left[\hat{T}^*y_1^*(i)\right]  \left( \frac{w_0 + \hat{x}_0}{2} (i) \right)  > \left(1 - \eta \right) \|\hat{T}^*y_1^*(i)\| \right\}.
$$
We use the  Lemma \ref{lemma:convex-series-estimate2} again to deduce
$$
\|\hat{T}P_B\| \geq \sum_{i \in B} \|\hat{T}^*y_1^*(i)\| > 1- \eta.
$$
As $Y$ is $\C$-uniformly convex, we get by Lemma \ref{lemma:uniformly-convex-projection} that
$$
\|\hat{T}(I-P_B)\| < \frac{\eps}{16}.
$$
 Moreover, for any $i \in B$, we see that
$$
\left\| \frac{w_0+\hat{x}_0}{2}(i)\right\| > 1-\eta,
$$
and so by uniform convexity of $X$ we can derive that
$$
\|w_0(i) - \hat{x}_0(i)\| < \frac{\eps}{2}.
$$

Now, define $\widetilde{S} \in \Lin(\ell_\infty^A(X),Y)$ by $\widetilde{S} := QP_B + Q(I-P_B)U$ where $U \in B_{\Lin(\ell_\infty^A(X))}$ is chosen so that $U\bigl(E_i(\hat{x_0}(i))\bigr) = E_i(w_0(i))$ for every $i \in A$ and $E_i \colon X \to \ell_\infty^A(X)$ is the $i^{\text{th}}$ injection map. Note that $\|\widetilde{S}\|\leq \|Q\|$. Let $S$ be the canonical extension of $\frac{\widetilde{S}}{\|\widetilde{S}\|}$ and define
\begin{displaymath}
z_0(i) :=\left\{\begin{array}{@{}cl}
\displaystyle \, w_0(i) & \text{if } i \in B \\
\displaystyle \, \hat{x}_0(i) & \text{if } i \in A \setminus B \\
\displaystyle \, x_0(i) & \text{otherwise}.
\end{array} \right.
\end{displaymath}
It is clear that  $\|Sz_0\| =\frac{\|Qw_0\|}{\|\widetilde{Q}\|}=1$.
Also,
$$
\|z_0-x_0\| \leq \max\left\{\sup_{i \in B} (\|w_0(i) - \hat{x}_0(i)\| + \|\hat{x}_0(i)-x_0(i)\|),~\sup_{i \in A} \|\hat{x}_0(i)-x_0(i)\|\right\} < \frac{\eps}{2} + \frac{\eta^3}{8} < \eps.
$$
Finally, we have
\begin{align*}
\|S-T\| 
&\leq\|SP_A-TP_A\|+\|TP_A-T\|\\
&=\left\|\frac{\widetilde{S}}{\|\widetilde{S}\|}-\hat{T}\right\|+\|TP_A-T\|\\
&\leq \left\|\frac{\widetilde{S}}{\|\widetilde{S}\|}-\widetilde{S}\right\|+\|\widetilde{S} - Q\| + \|Q-R\|+\|R-\hat{T}\|+\|TP_A-T\| \\
&< \left|1- \|\widetilde{S}\|\right|+\|QP_B + Q(I-P_B)U - Q \| +\frac{\eta^3}{4}+ \eta+\frac{\eps}{16}  \\
&\leq  \left|1- \|R\|\right|+2\|Q(I-P_B)\|+\frac{\eta^3}{4}+ \eta+\frac{\eps}{16}  \\
&<  \frac{\eps}{16} + \eta +2\|\hat{T}(I-P_B)\|+2\|\hat{T}-Q\|+\frac{\eta^3}{4}+ \eta+\frac{\eps}{16}  \\
&<\frac{\eps}{16} + \eta +\frac{\eps}{8}+3\left(\frac{\eta^3}{4}+ \eta\right)+\frac{\eps}{16}   < \eps,
\end{align*}
which finishes the proof.
\end{proof}

\begin{remark}
The same proof of Theorem \ref{theorem:c_0-uniformly-convex-BPBp} with the same constant holds for the case of finite sum of $X$ with $\ell_\infty$ norm, and this fact will be applied to prove Theorem \ref{theorem:c_0-L-p,p}. 
\end{remark}
As an immediate consequence of Theorem \ref{theorem:c_0-uniformly-convex-BPBp} and \cite[Proposition 2.4]{ACKLM}, we can deduce that the BPBp for endomorphisms on $c_0(X)$ holds for every uniformly convex $X$.

\begin{cor}
Let $X$ be a uniformly convex Banach space. Then, $(c_0(X), c_0(X))$ has the \textup{BPBp} for operators.
\end{cor}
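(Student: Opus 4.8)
The plan is to obtain this as a short two-step deduction that combines the main theorem of this section with the already-cited stability of the BPBp under $c_0$-sums taken in the range space; no genuinely new estimate should be needed, which is why it is billed as an immediate consequence.

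First I would record the elementary fact, noted in the introduction, that every uniformly convex space is $\C$-uniformly convex. Thus $X$ itself qualifies as an admissible $\C$-uniformly convex range space, and applying Theorem \ref{theorem:c_0-uniformly-convex-BPBp} with $Y := X$ yields directly that the pair $(c_0(X),X)$ has the BPBp for operators, inheriting the explicit modulus $\eta(\eps)$ constructed in that theorem.

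Next I would invoke \cite[Proposition 2.4]{ACKLM}, the statement that the BPBp for operators is preserved when the range is replaced by a $c_0$-sum of copies of itself; that is, if $(Z,Y)$ has the BPBp then so does $\bigl(Z,\bigl[\oplus_i Y\bigr]_{c_0}\bigr)$. Since by definition $c_0(X)=\bigl[\oplus_{i=1}^\infty X\bigr]_{c_0}$ is precisely the $c_0$-sum of countably many copies of $X$, taking $Z:=c_0(X)$ and $Y:=X$ promotes the pair $(c_0(X),X)$ to $(c_0(X),c_0(X))$, which is exactly the assertion.

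The only points requiring care — and these are the closest thing to an obstacle in an otherwise immediate argument — are bookkeeping in nature: confirming that the cited stability proposition is formulated for countably infinite $c_0$-sums in the range (and not merely finite ones), and that the passage from uniform convexity of $X$ to its $\C$-uniform convexity is legitimate so that Theorem \ref{theorem:c_0-uniformly-convex-BPBp} genuinely applies with $Y=X$. Both ingredients are furnished earlier in the paper, so the corollary follows at once.
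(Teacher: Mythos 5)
Your argument is correct and is exactly the paper's own route: the authors deduce the corollary as an immediate consequence of Theorem \ref{theorem:c_0-uniformly-convex-BPBp} applied with $Y=X$ (uniform convexity implying $\C$-uniform convexity) together with the stability of the BPBp under $c_0$-sums in the range space from \cite[Proposition 2.4]{ACKLM}. Your bookkeeping remarks are also on point, since the cited proposition does cover countably infinite $c_0$-sums and the common modulus $\eta(\eps)$ across the identical summands is what makes it applicable.
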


We devote the rest of this section to study the local BPBp for operators on $c_0$-sum of Banach spaces. As we commented in the introduction, the pair $(c_0,Y)$ has the $\mathbf{L}_{p,p}$ when $Y$ is $\C$-uniformly convex (see \cite[Theorem 2.12]{DKLM}). To extend this result, we recall one notion on a space. A Banach space $X$ is said to be \emph{micro-transitive} if for every $\eps>0$ there exists $0<\theta(\eps)<\eps$ so that whenever $x,y \in S_X$ satisfy $\|x-y\|<\theta(\eps)$, there exists an isometry $U \colon X \to X$ such that
$$
Ux = y \quad \text{and} \quad \|U-I\|<\eps,
$$
where $I \colon X \to X$ denotes the canonical identity operator. Hilbert spaces such as Euclidean spaces and $\ell_2$ are known to be micro-transitive, see \cite{AMS}. Note that every micro-transitive Banach space is uniformly convex and uniformly smooth, but the converse does not hold in general (see \cite{CDKKLM}).

\begin{theorem}\label{theorem:c_0-L-p,p}
Let $X$ be a micro-transitive Banach space and $Y$ be a $\C$-uniformly convex Banach space. Then, the pair $(c_0(X),Y)$ has the $\mathbf{L}_{p,p}$.
\end{theorem}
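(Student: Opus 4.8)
The plan is to run the proof of Theorem~\ref{theorem:c_0-uniformly-convex-BPBp} almost verbatim, but to exploit the fact that $x_0$ is now \emph{fixed} in order to upgrade the approximate attainment at a nearby point into \emph{exact} attainment at $x_0$ itself, using the micro-transitivity of $X$ to repair the perturbation without moving $x_0$. The first step is to isolate the finite ``active support'' of $x_0$. Since $x_0\in S_{c_0(X)}$ and $\|x_0(i)\|\to 0$, the set $E:=\{i\in\N\colon \|x_0(i)\|=1\}$ is finite and nonempty, and there is a gap $\gamma>0$ with $\|x_0(i)\|\leq 1-\gamma$ for every $i\notin E$. I would then let $\eta=\eta(\eps,x_0)$ be the constant produced by Theorem~\ref{theorem:c_0-uniformly-convex-BPBp}, shrunk further so that $\eta^3/8<\gamma$. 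As in that proof, every $i$ lying in the set $A$ satisfies $\|x_0(i)\|>1-\eta^3/8>1-\gamma$, whence $A\subseteq E$. Thus $A$ is finite — so that we may work on the finite sum $\ell_\infty^A(X)$, as permitted by the Remark following Theorem~\ref{theorem:c_0-uniformly-convex-BPBp} — and, crucially, $\|x_0(i)\|=1$ for every $i\in A$, so that the normalisation $\hat x_0(i)=x_0(i)/\|x_0(i)\|$ there coincides with $x_0(i)$ and $\hat x_0=P_A x_0$.

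Next I would reproduce the construction of Theorem~\ref{theorem:c_0-uniformly-convex-BPBp} up to and including the norm-attaining operator $Q\in\NA(\ell_\infty^A(X),Y)$ from Bourgain's theorem, its norming point $w_0\in S_{\ell_\infty^A(X)}$, and the subset $B\subseteq A$, retaining all the quantitative bounds: $\|Q-\hat T\|$, $\|\hat T(I-P_B)\|$, $1-\|w_0(i)\|$ and $\|w_0(i)-x_0(i)\|$ (for $i\in B$) are all controlled by powers of $\eta$ and can be made as small as desired. The genuinely new ingredient replaces the operator $\widetilde S$ of that proof. I would build a diagonal contraction $U=\bigoplus_{i\in A}U_i$ on $\ell_\infty^A(X)$ with $\|U_i\|\leq 1$ and $U_i x_0(i)=w_0(i)$ for \emph{every} $i\in A$, arranged to be close to the identity on the components that matter. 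For $i\in B$ I would invoke micro-transitivity to select an isometry $V_i$ of $X$ with $V_i x_0(i)=w_0(i)/\|w_0(i)\|$ and $\|V_i-I\|<\eps'$ (legitimate because $x_0(i)$ and $w_0(i)/\|w_0(i)\|$ are unit vectors at distance $<\theta(\eps')$ once $\eta$ is small), and set $U_i:=\|w_0(i)\|V_i$, so that $U_i x_0(i)=w_0(i)$ and $\|U_i-I\|\leq(1-\|w_0(i)\|)+\|V_i-I\|$ is small. For $i\in A\setminus B$ I would take the rank-one contraction $v\mapsto f_i(v)\,w_0(i)$, where $f_i\in S_{X^*}$ norms $x_0(i)$; this need not be near the identity but is harmless because $\|Q(I-P_B)\|$ is small.

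Setting $\widetilde S:=QU$ then gives $\widetilde S\hat x_0=Q(U\hat x_0)=Qw_0$, since $U\hat x_0=w_0$ by construction; as $\|U\|\leq 1$ this forces $\|\widetilde S\|=\|Q\|$ to be attained \emph{exactly} at $\hat x_0$, so the canonical extension $S$ of $\widetilde S/\|\widetilde S\|$ satisfies $\|Sx_0\|=\|\widetilde S\hat x_0\|/\|\widetilde S\|=1$ on the nose, which is precisely what the $\mathbf{L}_{p,p}$ requires. The bound $\|S-T\|<\eps$ would be carried out as in Theorem~\ref{theorem:c_0-uniformly-convex-BPBp}, the only additional term being $\|Q(U-I)\|$, which — after splitting $U-I$ across $P_B$ and $I-P_B$ and using that the diagonal $U$ commutes with $P_B$ — is bounded by $\|Q\|\sup_{i\in B}\|U_i-I\|+2\|Q(I-P_B)\|$, both small.

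I expect the main obstacle to be exactly the passage from approximate to exact attainment: one must guarantee simultaneously the rigid equalities ``$U$ is a contraction'' and ``$U\hat x_0=w_0$'' (so that $\widetilde S$ attains its norm precisely at $x_0$) together with the approximate condition ``$U$ is close to the identity in operator norm.'' Reconciling an exact target with an approximation is what forces the two structural inputs to enter: the inclusion $A\subseteq E$, which makes every relevant component of $x_0$ a genuine unit vector so that norm-preserving rotations are available, and the micro-transitivity of $X$, which realises the needed rotation on $B$ by an isometry near the identity, with the uncontrolled $A\setminus B$ part absorbed by the already-small $Q(I-P_B)$. The delicate points are therefore the bookkeeping that keeps $\|U\|\leq 1$ while $\|Q(U-I)\|<\eps$, and the verification that $\eta$ may legitimately be taken to depend on $x_0$ through the gap $\gamma$.
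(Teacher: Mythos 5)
Your proposal is correct, and its two load-bearing ideas coincide with the paper's: you let $\eta$ depend on $x_0$ through the gap $\gamma=m_{x_0}$ so that the set $A$ consists only of coordinates with $\|x_0(i)\|=1$ (hence $A$ is finite and $\hat x_0=P_Ax_0$), and you use micro-transitivity to precompose a norm-attaining operator with near-identity maps that carry $x_0$ \emph{exactly} onto its norming point. The difference is organizational. The paper treats the finite-$\ell_\infty$-sum version of Theorem~\ref{theorem:c_0-uniformly-convex-BPBp} as a black box, obtains $(\hat S,z_0)$ with $z_0$ within $\theta(\eps/3)/2$ of $\hat x_0$, normalizes $z_0$ coordinatewise (checking by a convexity argument that the normalized point still norms $\hat S$), and then defines $Sx:=\widetilde S((U_iP_ix)_i)$ with isometries $U_ix_0(i)=\hat z_0(i)$. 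You instead inline the proof of that theorem and precompose Bourgain's operator $Q$ with a single diagonal contraction $U$, built from $\|w_0(i)\|V_i$ on $B$ and rank-one contractions on $A\setminus B$; the identity $\|QU\|=\|Q\|=\|QU\hat x_0\|$ then gives exact attainment without any separate normalization step. Both routes work, and the error bookkeeping is essentially identical. The one point you should make explicit rather than gesture at: since you need $\bigl\|w_0(i)/\|w_0(i)\|-x_0(i)\bigr\|<\theta(\eps/3)$ on $B$ before micro-transitivity applies, the internal constants of the BPBp construction must be run at accuracy $\theta(\eps/3)$ rather than $\eps$ (e.g.\ $\eta\leq\delta_X(\theta(\eps/3)/2)$, plus the $1-\|w_0(i)\|\leq 2\eta$ correction from replacing $w_0(i)$ by its normalization) --- this is exactly the role played by the term $\eta\bigl(\theta(\eps/3)/2\bigr)$ in the paper's choice of $\gamma$.
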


\begin{proof}
From Theorem \ref{theorem:c_0-uniformly-convex-BPBp} and the fact that micro-transitivity implies the uniform convexity, the pair $(c_0(X),Y)$ has the BPBp for operators with the function $\eta(\cdot)$ given in the definition of the BPBp. Let $\theta(\cdot)$ be the function in the definition of micro-transitivity of $X$ and $\delta_{\mathbb{C}}$ be the modulus of $\C$-convexity of $Y$.
 
For given $0<\eps<1$ and $x_0 \in S_{c_0(X)}$, set $\gamma=\gamma(\eps) := \min \left\{ m_{x_0}, \eta\left(\frac{\theta(\frac{\eps}{3})}{2}\right), \frac{\delta_{\mathbb{C}}\left(\frac{\eps}{6}\right)}{1+\delta_{\mathbb{C}}\left(\frac{\eps}{6}\right)} \right\}$ where $m_{x_0} := \min_{i \in \N} \{ 1- \|x_0(i)\| \colon \|x_0(i)\|<1 \}$ and assume that $T \in S_{\Lin(c_0(X),Y)}$ and $x_0 \in S_{c_0(X)}$ satisfy that
$$
\|Tx_0\| > 1-\frac{\gamma^2}{4}.
$$
Choose $y_0^* \in S_{Y^*}$ so that
$$
\sum_{i=1}^\infty \bigl[ (T^*y_0^*)(i) \bigr] (x_0(i)) = y_0^*(Tx_0) = \|Tx_0\| > 1-\frac{\gamma^2}{4}.
$$
Similarly to the proof of Theorem \ref{theorem:c_0-uniformly-convex-BPBp}, define a subset $A\subset N$ by
$$
A := \left\{i \in N \colon \re \left[ {(T^*y_0^*)(i)} \right] (x_0(i)) > \left(1-\frac{\gamma}{2}\right){\|(T^*y_0^*)(i)\|}\right\}.
$$
Then, from the definition of $\gamma$ we have that $\|x_0(i)\|=1$ for each $i\in A$ and so $A$ is finite. By Lemma \ref{lemma:convex-series-estimate2} and Lemma \ref{lemma:uniformly-convex-projection}, we deduce that
$$
 \sum_{i \in A} \|(T^*y_0^*)(i)\| > 1- \frac{\gamma}{2}
\text{~and~}
\|TP_A-T\| < \frac{\eps}{6}.
$$
Hence, the canoncial restriction $\hat{T} \in B_{\Lin(\ell_\infty^A(X),Y)}$ of $T$ and $\hat{x}_0 = (\hat{x}_0(i))_{i \in A} = \left(x_0(i)\right)_{i \in A} \in S_{\ell_\infty^A(X)}$ satisfy that
$$
\bigl\|\hat{T}\hat{x}_0 \bigr\| \geq \re \sum_{i \in A} \|(T^*y_0^*)(i)\|\left[ \frac{(T^*y_0^*)(i)}{\|(T^*y_0^*)(i)\|} \right] (\hat{x}_0(i))  >1-{\gamma},
$$
and so it holds that $\bigl\|\widetilde{T}\hat{x}_0 \bigr\| > 1- \gamma$
where $\widetilde{T}  := \frac{\hat{T}}{\|\hat{T}\|}\in S_{\Lin(\ell_\infty^A(X),Y)}$.

We now apply the version for the finite $\ell_\infty$-sum of Theorem \ref{theorem:c_0-uniformly-convex-BPBp} to obtain a new operator $\hat{S} \in S_{\Lin(\ell_\infty^A(X)),Y)}$ and a point $z_0 \in S_{\ell_\infty^A(X)}$ satisfying
$$
\|\hat{S}z_0\|=1, \quad \|z_0-\hat{x}_0\|<\frac{\theta(\frac{\eps}{3})}{2} \quad \text{and} \quad \|\hat{S}-\widetilde{T}\|<\frac{\theta(\frac{\eps}{3})}{2}.
$$
If we consider $\hat{z}_0 = (\hat{z}_0(i))_{i \in A} = \left( \frac{z_0(i)}{\|z_0(i)\|} \right)_{i \in A} \in S_{\ell_\infty^A(X)}$ we get $\|\hat{S}\hat{z}_0\|=1$ from the convexity of the norm since 
$z_0=\frac{z_0+\left(\hat{z}_0-z_0\right)+z_0-\left(\hat{z}_0-z_0\right)}{2}$ and $\left\|z_0\pm\left(\hat{z}_0-z_0\right)\right\|\leq 1$. Moreover, we have that  
$$
\|\hat{z}_0-\hat{x}_0\| \leq \|\hat{z}_0 - z_0\| + \|z_0 - \hat{x}_0\|=\max_{i\in A}(1-\|z_0(i)\|)+ \|z_0 - \hat{x}_0\|< \theta\left(\frac{\eps}{3}\right).
$$
From the micro-transitivity of $X$, we take an isometry $U_i \colon X \to X$ for each $i \in \N$ so that $\|U_i-I\|<\frac{\eps}{3}$, $U_ix_0(i)=\hat{z}_0(i)$ for $i\in A$ and $U_i=I$ for $i \notin A$.

Now, define $\widetilde{S} \colon c_0(X) \to Y$  be the canonical extension of $\hat{S}$. Then, we find the desired operator $S \in S_{\Lin(c_0(X),Y)}$ by $Sx := \widetilde{S}\left((U_iP_ix)_i\right)$ for each $x\in c_0(X)$ where $P_i \colon c_0(X) \to X$ is the coordinate projection. Indeed, it is clear that $\|Sx_0\|=\|\hat{S}\hat{z}_0\|=1$, and furthermore, we have that
\begin{align*}
\|S-T\|
&\leq \|S-\widetilde{S}\|+\|\widetilde{S}-TP_A\|+\|TP_A-T\|\\
 &\leq \|S - \widetilde{S}\| + \|\hat{S} - \widetilde{T}\| + \|\widetilde{T} - \hat{T}\| + \|TP_A - T \| \\
&< \|\widetilde{S}\|\max_{i\in A} \|U_i-I\|+\frac{\theta(\frac{\eps}{3})}{2} + \left|1-\|\hat{T}\|\right| + \frac{\eps}{6} \\
&< \frac{\eps}{3}+\frac{\theta(\frac{\eps}{3})}{2} + \frac{\eps}{6} + \frac{\eps}{6} < \eps,
\end{align*}
as desired.
\end{proof}

\section{The Bishop-Phelps-Bollob\'as property for bilinear forms on the $c_0$-sum}\label{section:bilinear-form}

In this section, we discuss the Bishop-Phelps-Bollob\'as property for bilinear forms. All the scalar field is complex since it is crucial that the complex space $\ell_1$-sum ($\ell_1(X)$) of a $\mathbb{C}$-uniformly convex space $X$ is also $\mathbb{C}$-uniformly convex \cite{DT}. To do so, we first need a lemma which acts simlarly as Lemma \ref{lemma:uniformly-convex-projection} does, but it is based on bilinear forms. In the following lemma, the notion $P_{A_1,A_2} \colon c_0(X) \times c_0(X) \to \ell_\infty^{A_1}(X) \times \ell_\infty^{A_2}(X)\subset c_0(X) \times c_0(X)$ denotes the canonical projection.

\begin{lemma}\label{lemma:bilinear-projection}
Let $X$ be a uniformly convex and uniformly smooth Banach space and let $T \in B_{\Lin^2(c_0(X) \times c_0(X))}$ be given. For every $\eps>0$, there exists $0<\gamma(\eps)<1$ such that if finite subsets $A_L,A_R\subset \mathbb{N}$ satisfy that
$$
\|TP_{A_L,A_R}\|> 1- \gamma(\eps),
$$
then 
$$
\|T-TP_{A_L,A_R}\|<\eps.
$$
\end{lemma}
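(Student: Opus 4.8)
The plan is to reduce this bilinear statement to the linear projection lemma (Lemma \ref{lemma:uniformly-convex-projection}) by passing through the two canonical linearizations of $T$. The crucial structural fact is that the dual of $c_0(X)$ is the $\ell_1$-sum $\ell_1(X^*)$, and that this space is $\C$-uniformly convex: uniform smoothness of $X$ makes $X^*$ uniformly convex, hence $\C$-uniformly convex, and by \cite{DT} the $\ell_1$-sum $\ell_1(X^*)=(c_0(X))^*$ is then $\C$-uniformly convex. Let $\delta_\C$ denote its modulus of $\C$-convexity. Consider the operators $L_T, R_T \in B_{\Lin(c_0(X),\ell_1(X^*))}$ determined isometrically by $T(x,y)=(L_Tx)(y)=(R_Ty)(x)$; since $\|T\|\leq 1$, both have norm at most one.

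First I would split the error form by inserting and deleting an intermediate term, writing for all $x,y \in c_0(X)$
\[
(T-TP_{A_L,A_R})(x,y) = T\bigl((I-P_{A_L})x,\, y\bigr) + T\bigl(P_{A_L}x,\, (I-P_{A_R})y\bigr).
\]
Taking suprema over the unit balls, the first summand has norm exactly $\|L_T(I-P_{A_L})\|$, while the second is dominated, after dropping the norm-nonincreasing $P_{A_L}$ in the first slot, by $\|R_T(I-P_{A_R})\|$. Thus $\|T-TP_{A_L,A_R}\| \leq \|L_T(I-P_{A_L})\| + \|R_T(I-P_{A_R})\|$, and it suffices to control each operator norm on the right.

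Next I would feed each of these into Lemma \ref{lemma:uniformly-convex-projection}, for which I need the relevant restrictions of $L_T$ and $R_T$ to be almost norming. This is immediate from the hypothesis: since $\|P_{A_R}y\|\leq \|y\|$ for every $y$, we have $\|L_TP_{A_L}\| = \sup_{\|x\|,\|y\|\leq 1}|T(P_{A_L}x,y)| \geq \|TP_{A_L,A_R}\| > 1-\gamma$, and symmetrically $\|R_TP_{A_R}\| \geq \|TP_{A_L,A_R}\| > 1-\gamma$. Hence, choosing $\gamma(\eps) := \frac{\delta_\C(\eps/2)}{1+\delta_\C(\eps/2)}$ and applying Lemma \ref{lemma:uniformly-convex-projection} (with range $\ell_1(X^*)$ and threshold $\eps/2$) to $L_T$ with the subset $A_L$ and to $R_T$ with the subset $A_R$, we obtain $\|L_T(I-P_{A_L})\|\leq \eps/2$ and $\|R_T(I-P_{A_R})\|\leq \eps/2$, which combine to give $\|T-TP_{A_L,A_R}\|<\eps$; note also that $0<\gamma(\eps)<1$ since $\delta_\C(\eps/2)>0$.

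The main obstacle is conceptual rather than computational: one has to recognize that the right objects to work with are not $T$ itself but its two linearizations landing in the $\C$-uniformly convex space $(c_0(X))^*=\ell_1(X^*)$, so that the single-variable projection lemma can be invoked in each slot separately. Once the splitting and the two almost-norming estimates are in place, the bookkeeping is routine. I would also remark that only the uniform smoothness of $X$ actually enters this argument, through $X^*$ being uniformly convex; the uniform convexity of $X$ is inherited from the standing hypotheses but plays no role in this particular lemma.
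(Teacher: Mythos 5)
Your proposal is correct and follows essentially the same route as the paper: both reduce the bilinear estimate to two applications of Lemma \ref{lemma:uniformly-convex-projection} via the linearizations into the $\C$-uniformly convex space $\ell_1(X^*)=(c_0(X))^*$, using the telescoping decomposition $T-TP_{A_L,A_R}=(T-TP_{A_L,\N})+(TP_{A_L,\N}-TP_{A_L,A_R})$ and the same choice $\gamma(\eps)=\frac{\delta_\C(\eps/2)}{1+\delta_\C(\eps/2)}$. The only (harmless) cosmetic difference is that you drop $P_{A_L}$ before applying the lemma in the second slot, whereas the paper applies it directly to $TP_{A_L,\N}$.
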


\begin{proof}
Let $\delta_\C$ be the modulus of $\C$-convexity of $\ell_1(X^*)$. For a fixed $0<\eps<1$, put $\gamma(\eps) := \frac{\delta_\C(\frac{\eps}{2})}{1+\delta_\C(\frac{\eps}{2})}$ and assume that $\|TP_{A_L,A_R}\|> 1- \gamma(\eps)$. Since $\Lin^2(c_0(X) \times c_0(X))$ is isometrically isomorphic to $\Lin(c_0(X), \ell_1(X^*))$  and 
$$\|TP_{A_L,\mathbb{N}}\|\geq \|TP_{A_L,A_R}\|> 1- \gamma(\eps),$$
Lemma \ref{lemma:uniformly-convex-projection} gives that $\|T-TP_{A_L,\mathbb{N}}\|<\frac{\eps}{2}$. We apply the same argument to obtain that $\|TP_{A_L,\mathbb{N}}-TP_{A_L,A_R}\|<\frac{\eps}{2}$ from the equality $P_{A_L,A_R}=P_{A_L,\mathbb{N}}P_{\mathbb{N},A_R}$. Finally, we have that
$$
\|T - TP_{A_L,A_R}\| \leq \|T - TP_{A_L,\mathbb{N}}\| + \|TP_{A_L,\mathbb{N}} - TP_{A_L,\mathbb{N}}P_{\mathbb{N},A_R}\| < \frac{\eps}{2} + \frac{\eps}{2} = \eps.
$$

\end{proof}

\begin{theorem}\label{theorem:c_0-bilinear-form-BPBp}
Let $X$ be a uniformly convex and uniformly smooth Banach space. Then, the pair $(c_0(X),c_0(X))$ has the BPBp for bilinear forms.
\end{theorem}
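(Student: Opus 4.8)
The plan is to pass to operators via the isometric identification $\Lin^2(c_0(X)\times c_0(X))\cong\Lin(c_0(X),\ell_1(X^*))$ given by $T(x,y)=(L_Tx)(y)$, where $\ell_1(X^*)=(c_0(X))^*$. Uniform smoothness of $X$ makes $X^*$ uniformly convex, hence $\C$-uniformly convex, so by \cite{DT} the range $\ell_1(X^*)$ is $\C$-uniformly convex, putting us in the situation of Theorem~\ref{theorem:c_0-uniformly-convex-BPBp}. The essential extra content of the bilinear statement is that attaining the norm at $(u_0,v_0)$ forces both that $L_S$ attains its norm at $u_0$ and that the functional $L_Su_0\in\ell_1(X^*)$ attains its norm at $v_0$; operator BPBp addresses only the former, so controlling the second variable $v_0$ is where the work lies.

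First I would reduce to finitely supported data. Given $T\in S_{\Lin^2(c_0(X)\times c_0(X))}$ and $(x_0,y_0)$ with $|T(x_0,y_0)|>1-\eta$, I rotate $T$ so that the value is positive. Since $x_0,y_0\in c_0(X)$ and $T$ is continuous, taking finite $A_L,A_R\subset\N$ large enough makes $|T(P_{A_L}x_0,P_{A_R}y_0)|$, and hence $\|TP_{A_L,A_R}\|$, exceed $1-\gamma(\eps)$; Lemma~\ref{lemma:bilinear-projection} then yields $\|T-TP_{A_L,A_R}\|<\eps/c$ for a fixed constant $c$. Thus it suffices to solve the problem for the restriction $\hat T$ on the finite $\ell_\infty$-sums $E_L=\ell_\infty^{A_L}(X)$ and $E_R=\ell_\infty^{A_R}(X)$, both of which are reflexive.

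Next I would dispatch the first variable by applying the finite $\ell_\infty$-sum version of Theorem~\ref{theorem:c_0-uniformly-convex-BPBp} (its accompanying Remark) to the normalized operator $\widetilde L\colon E_L\to\ell_1^{A_R}(X^*)$ at the point $\hat x_0$, where $\|\widetilde L\hat x_0\|\approx1$. This produces $S$ and $w_0$ with $\|Sw_0\|=1$, $w_0$ close to $\hat x_0$, and $S$ close to $\widetilde L$ in operator (equivalently bilinear) norm. As $E_R$ is reflexive, the norm-one functional $Sw_0\in E_R^*$ attains its norm at some $v_0\in S_{E_R}$, so the bilinear form associated with $S$ already attains its norm at $(w_0,v_0)$; it remains only to push $v_0$ toward $\hat y_0$.

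This is the main obstacle, since operator BPBp gives no control on where $Sw_0$ attains its norm. Because $S\approx\widetilde L$ and $w_0\approx\hat x_0$, the functional $Sw_0$ is close to $L_{\hat T}\hat x_0$, which nearly attains its norm at $\hat y_0$ by the reduction, as $(L_{\hat T}\hat x_0)(\hat y_0)=\hat T(\hat x_0,\hat y_0)\approx1$. Writing things coordinatewise in $\ell_1^{A_R}(X^*)$ and applying Lemma~\ref{lemma:convex-series-estimate2}, on every coordinate $j$ carrying significant mass both $\hat y_0(j)$ and $v_0(j)$ nearly maximize a common (up to small error) functional in $S_{X^*}$, so uniform convexity of $X$ forces $\|v_0(j)-\hat y_0(j)\|$ to be small --- exactly the device used at the end of the proof of Theorem~\ref{theorem:c_0-uniformly-convex-BPBp}. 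On the remaining low-mass coordinates $v_0(j)$ may be far from $\hat y_0(j)$, and there I would perform a surgery analogous to the $P_B$/$U$ construction: modify $\hat S$ on those range coordinates (redirecting or annihilating the small $X^*$-components and renormalizing) so that the modified form still attains its norm at a pair agreeing with $(w_0,\hat y_0)$ on those coordinates, at the cost of only a small change in norm. Finally I would extend the modified bilinear form canonically to $c_0(X)\times c_0(X)$ and collect the bounds $\|u_0-x_0\|<\eps$, $\|v_0-y_0\|<\eps$ and $\|S-T\|<\eps$. I expect the delicate point to be carrying out this low-mass surgery so that the second variable is pulled to $\hat y_0$ in the sup norm while both the attainment and the operator-norm control $\|\hat S-\hat T\|$ already obtained are preserved.
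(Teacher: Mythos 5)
Your plan is sound, and it takes a genuinely different route from the paper's. The paper never invokes Theorem \ref{theorem:c_0-uniformly-convex-BPBp}: after the same reduction to finite blocks via Lemmas \ref{lemma:convex-series-estimate2} and \ref{lemma:bilinear-projection}, it perturbs the bilinear form itself, setting $R(z_L,z_R)=\hat T(z_L,z_R)+\eta\,\hat T(z_L,\hat x_R)\,\hat T(\hat x_L,z_R)\cdot\frac{|\hat T(\hat x_L,\hat x_R)|}{\hat T(\hat x_L,\hat x_R)}$, applies the Aron--Finet--Werner theorem \cite{AFW} on denseness of norm-attaining bilinear forms on RNP spaces to get $Q\approx R$ attaining its norm at $(w_L,w_R)$, and reads off from the \emph{product} structure of the perturbation that both $|\hat T(w_L,\hat x_R)|$ and $|\hat T(\hat x_L,w_R)|$ are close to $1$; the two variables are then handled symmetrically by the midpoint/uniform-convexity argument and the surgery $Q(P_{B_L,B_R}+WP_{A_L\setminus B_L,A_R\setminus B_R})$. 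You instead treat the variables asymmetrically: the first by quoting the finite-sum version of Theorem \ref{theorem:c_0-uniformly-convex-BPBp} (legitimate, since uniform smoothness of $X$ makes $\ell_1^{A_R}(X^*)$ $\C$-uniformly convex by \cite{DT}), the second by exploiting that a norm-one element of $\ell_1^{A_R}(X^*)$ attains its norm exactly when each nonzero component does (reflexivity of $X$), so that on coordinates where $Sw_0$ and $L_{\hat T}\hat x_0$ are multiplicatively close, $v_0(j)$ and $\hat y_0(j)$ both nearly norm a common element of $S_{X^*}$ and uniform convexity of $X$ pins them together. Your route avoids \cite{AFW} altogether (Bourgain's theorem is already inside Theorem \ref{theorem:c_0-uniformly-convex-BPBp}) and makes transparent why the operator BPBp for $(c_0(X),\ell_1(X^*))$ upgrades to the bilinear BPBp in this setting, which fails in general. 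Two points need care in a full write-up: (i) the low-mass surgery requires an operator-norm bound, not just smallness of $\sum_{j\notin G}\|(Sw_0)(j)\|$ at the single point $w_0$; this is supplied by Lemma \ref{lemma:bilinear-projection}, since $\|SP_{A_L,G}\|\geq\sum_{j\in G}\|(Sw_0)(j)\|$ is close to $1$, whence $\|S-SP_{A_L,G}\|$ is small and composing with norm-one maps $V_j$ sending $\hat y_0(j)$ to $v_0(j)$ on $A_R\setminus G$ costs at most $2\|SP_{A_L,A_R\setminus G}\|$; (ii) the accuracy fed into Theorem \ref{theorem:c_0-uniformly-convex-BPBp} must be taken of order $\delta_X(\eps/2)^2$ rather than $\eps$, so that the Chebyshev-type selection of heavy coordinates via Lemma \ref{lemma:convex-series-estimate} leaves a relative error below $\delta_X(\eps/2)$; as this still depends only on $\eps$, the uniformity required by the definition is preserved.
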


\begin{proof}
Let $0<\eps<1$ be given. Set $\eta(\eps):= \min \left\{\frac{\eps}{2^4}, \gamma\left(\frac{\eps}{2^4}\right), \delta_X\left(\frac{\eps}{2}\right)\right\}$ where $\gamma$ is the function given in Lemma \ref{lemma:bilinear-projection} and $\delta_X$ is the modulus of convexity of $X$. Let $T \in \Lin^2(c_0(X) \times c_0(X))$ with $\|T\|=1$ and $(x_L,x_R) \in S_{c_0(X)} \times S_{c_0(X)}$ satsify that
$$
|T(x_L,x_R)| > 1-\frac{\eta^{12}}{2^{22}}.
$$
By taking a suitable rotation, we may assume that 
$$
\re T(x_L,x_R) > 1-\frac{\eta^{12}}{2^{22}},
$$
and define a subset $A_R$ of $\N$ by
$$A_R = \left\{ i \in \mathbb{N} \colon \re \left[{(L_Tx_L)(i)}\right] (x_R(i)) > \left(1- \frac{\eta^6}{2^{11}}\right)\|(L_Tx_L)(i)\| \right\}.
$$
It is clear that $\|x_R(i)\|>1-\frac{\eta^6}{2^{11}}$ for $i \in A_R$, and by Lemma \ref{lemma:convex-series-estimate2} we have that
$$
\sum_{i \in A_R} \|(L_Tx_L)(i)\| > 1- \frac{\eta^6}{2^{11}}.
$$
We also deduce
\begin{align*}
\re T(x_L,\hat{x}_R) &= \re \sum_{i \in A_R} \|(L_T x_L)(i)\| \left[ \frac{(L_Tx_L)(i)}{\|(L_Tx_L)(i)\|} \right] (\hat{x}_R(i)) \\
&\geq \re \sum_{i \in A_R} \|(L_T x_L)(i)\| \left[ \frac{(L_Tx_L)(i)}{\|(L_Tx_L)(i)\|} \right] (x_R(i))  > 1-\frac{\eta^6}{2^{10}},
\end{align*}
where $\hat{x}_R$ is defined by $\hat{x}_R(i) = \frac{x_R(i)}{\|x_R(i)\|}$ for $i\in A_R$ and $0$ otherwise. 
 Again, we define a subset $A_L$ of $\mathbb{N}$ by
$$
 A_L = \left\{ i \in \mathbb{N} \colon \re \left[ {(R_T\hat{x}_R)(i)} \right] (x_L(i)) >\left(1- \frac{\eta^3}{2^5}\right)\|(R_T\hat{x}_R)(i)\| \right\}.
$$
By Lemma \ref{lemma:convex-series-estimate2}, we have that
$$
\sum_{i \in A_L} \|(R_T\hat{x}_R)(i)\| > 1- \frac{\eta^3}{2^5}
$$
and $\|x_L(i)\|>1-\frac{\eta^3}{2^5}$ for $i \in A_L$. Hence, elements $\hat{x}_R$ and $\hat{x}_L$ defined by $\hat{x}_L(i) = \frac{x_L(i)}{\|x_L(i)\|}$ for $i\in A_L$ and $0$ otherwise satisfy that
$$
\re T(\hat{x}_L,\hat{x}_R)= \re \sum_{i \in A_L} \|(R_T \hat{x}_R)(i)\| \left[ \frac{(R_T\hat{x}_R)(i)}{\|(R_T\hat{x}_R)(i)\|} \right] (\hat{x}_L(i)) > 1-\frac{\eta^3}{2^4}
$$ 
and that
$$
\|\hat{x}_L(i)-x_L(i)\| < \frac{\eta^3}{2^5} \quad \text{for } i \in A_L \quad \text{and} \quad \|\hat{x}_R(i)-x_R(i)\| < \frac{\eta^6}{2^{11}} \quad \text{for } i \in A_R.
$$

Moreover, from Lemma \ref{lemma:bilinear-projection}, we get
$$
\|TP_{A_L,A_R}-T\|<\frac{\eps}{2^4}
$$

From now on, we will consider $\hat{x}_L$ and $\hat{x}_R$ as elements in $\ell_\infty^{A_L}(X)$ and $\ell_\infty^{A_R}(X)$ respectively for the convenience. With the canonical restriction $\hat{T} \in B_{\Lin^2(\ell_\infty^{A_L}(X) \times \ell_\infty^{A_R}(X))}$ of $T$, define $R \in \Lin^2(\ell_\infty^{A_L}(X) \times \ell_\infty^{A_R}(X))$ by
$$
R(z_L,z_R):= \hat{T}(z_L,z_R) + \eta \hat{T}(z_L, \hat{x}_R) \hat{T}(\hat{x}_L,z_R) \frac{\bigl|\hat{T}(\hat{x}_L,\hat{x}_R)\bigr|}{\hat{T}(\hat{x}_L,\hat{x}_R)}
$$
for $z_L\in \ell_\infty^{A_L}(X)$ and $z_R \in \ell_\infty^{A_R}(X)$. 

Since $\ell_\infty^{A_L}(X)$ and $\ell_\infty^{A_R}(X)$ have the Radon-Nikod\'ym property, according to \cite[Theorem 1]{AFW} there exists $Q \in \Lin^2(\ell_\infty^{A_L}(X) \times \ell_\infty^{A_R}(X))$ with $\|Q\|=\|R\|$ such that $Q$ attains its norm at $(w_L,w_R) \in S_{\ell_\infty^{A_L}(X)} \times S_{\ell_\infty^{A_R}(X)}$ and $\|Q-R\|<\frac{\eta^3}{2^4}$.

 Note that
\begin{equation}\label{eq:lower-bound-of-R}
\|R\| \geq |R(\hat{x}_L,\hat{x}_R)| = \bigl|\hat{T}(\hat{x}_L,\hat{x}_R)\bigr| + \eta \bigl|\hat{T}(\hat{x}_L,\hat{x}_R)\bigr|^2> \left(1-\frac{\eta^3}{2^4}\right) + \eta\left(1-\frac{\eta^3}{2^4}\right)^2.
\end{equation}
On the other hand,
\begin{equation}\label{eq:upper-bound-of-R}
\|R\|=\|Q\|=|Q(w_L,w_R)|\leq \frac{\eta^3}{2^4} +|R(w_L,w_R)|\leq \frac{\eta^3}{2^4} + 1 + \eta \bigl|\hat{T}(w_L,\hat{x}_R)\bigr| \bigl|\hat{T}(\hat{x}_L,w_R)\bigr|.
\end{equation}
Combining \eqref{eq:lower-bound-of-R} with \eqref{eq:upper-bound-of-R}, we obtain that
$$
1 - \frac{\eta^2}{2^2} \leq \left(1-\frac{\eta^3}{2^4}\right)^2 - \frac{\eta^2}{2^3} \leq \min \left\{ \bigl|\hat{T}(w_L,\hat{x}_R)\bigr|, \bigl|\hat{T}(\hat{x}_L,w_R)\bigr| \right\}.
$$
Here, we may assume that $\bigl|\hat{T}(w_L,\hat{x}_R)\bigr| = \re \hat{T}(w_L,\hat{x}_R)$ and $\bigl|\hat{T}(\hat{x}_L,w_R)\bigr| = \re \hat{T}(\hat{x}_L,w_R)$ by rotating $w_L$ and $w_R$ if necessary. For a set 
$$
B_L := \left\{ i \in A_L \colon \re \left[ \left(R_{\hat{T}} \hat{x}_R \right)(i) \right] \left( \frac{w_L+\hat{x}_L}{2} (i) \right) > \left(1-\frac{\eta}{2}\right) \left\|  \left(R_{\hat{T}} \hat{x}_R \right)(i) \right\| \right\},
$$
Lemma \ref{lemma:convex-series-estimate2} and Lemma \ref{lemma:bilinear-projection} show that
$$\re \hat{T} \left( P_{B_L}\left(\frac{w_L + \hat{x}_L}{2}\right) , \hat{x}_R \right)>1-\eta \quad \text{and} \quad \|\hat{T}-\hat{T}P_{B_L,A_R}\|=\|\hat{T}P_{A_L\setminus B_L,A_R}\|<\frac{\eps}{2^4}.$$
Similarly, a set 
$$
B_R := \left\{ i \in A_R \colon \re \left[ \left(L_{\hat{T}} \hat{x}_L \right)(i) \right] \left( \frac{w_R+\hat{x}_R}{2} (i) \right) > \left(1-\frac{\eta}{2}\right) \left\|  \left(L_{\hat{T}} \hat{x}_L \right)(i) \right\| \right\}
$$
satisfies that 
 $$\re \hat{T} \left( \hat{x}_L ,  P_{B_R}\left(\frac{w_R + \hat{x}_R}{2} \right)\right)>1-\eta \quad \text{and} \quad \|\hat{T}- \hat{T}P_{A_L,B_R}\|=\|\hat{T}P_{A_L,A_R\setminus B_R}\|<\frac{\eps}{2^4}.
$$
Thus, we deduce that 
$$
\|\hat{T} - \hat{T}P_{B_L,B_R}\| \leq \|\hat{T} - \hat{T}P_{B_L,A_R}\|+\|\hat{T}P_{B_L,A_R}- \hat{T}P_{B_L,B_R}\|\leq \|\hat{T} - \hat{T}P_{B_L,A_R}\|+\|\hat{T}- \hat{T}P_{A_L,B_R}\|< \frac{\eps}{2^3}.
$$
Since we have for each $i\in B_L$ and $j\in B_R$ that
$$
\left\| \frac{w_L + \hat{x}_L}{2} (i) \right\| > 1-\eta \quad \text{and} \quad \left\| \frac{w_R + \hat{x}_R}{2} (j) \right\| > 1-\eta,
$$
 the uniform convexity of $X$ implies that 
$$
\|w_L(i) - \hat{x}_L(i)\| < \frac{\eps}{2} \quad \text{and} \quad \|w_R(i) - \hat{x}_R(j)\| < \frac{\eps}{2}.
$$

Let $U_i \in B_{\Lin(X)}$ and $V_i \in B_{\Lin(X)}$ be chosen so that $U_i(\hat{x}_L(i)) = w_L(i)$ for each $i\in A_L$ and $V_i(\hat{x}_R(i)) = w_R(i)$ for each $i \in A_R$. Now, define $\widetilde{S} \in \Lin^2(\ell_\infty^{A_R}(X) \times \ell_\infty^{A_R}(X))$  by
 $$\widetilde{S} := Q\left(P_{B_L,B_R}+ WP_{A_L\setminus B_L,A_R\setminus B_R}\right)$$
where $W \colon \ell_\infty^{A_L}(X) \times \ell_\infty^{A_R}(X) \to \ell_\infty^{A_L}(X) \times \ell_\infty^{A_R}(X)$ is given by $W(z_L,z_R) := \left((U_iz_L(i))_i, (V_iz_R(i))_i\right)$ for $z_L=(z_L(i))_{i \in A_L} \in \ell_\infty^{A_L}(X)$ and $z_R=(z_R(i))_{i \in A_R} \in \ell_\infty^{A_R}(X)$. 

Let $S\in \Lin^2(c_0(X) \times c_0(X))$ be the canonical extension of $\frac{\widetilde{S}}{\|\widetilde{S}\|}$ and define
\begin{displaymath}
u_L(i) :=\left\{\begin{array}{@{}cl}
\displaystyle \, w_L(i) & \text{if } i \in B_L \\
\displaystyle \, \hat{x}_L(i) & \text{if } i \in A_L \setminus B_L \\
\displaystyle \, x_L(i) & \text{otherwise},
\end{array} \right.
\end{displaymath}
\begin{displaymath}
u_R(j) :=\left\{\begin{array}{@{}cl}
\displaystyle \, w_R(j) & \text{if } j \in B_R \\
\displaystyle \, \hat{x}_R(j) & \text{if } j \in A_R \setminus B_R \\
\displaystyle \, x_R(j) & \text{otherwise}.
\end{array} \right.
\end{displaymath}

It remains to prove that those $S$ and $(u_L,u_R)$ are the desired bilinear form and its norm attaining point. First, we have that 
$$
|S(u_L,u_R)|=\frac{|\widetilde{S}(u_L,u_R)|}{\|\widetilde{S}\|}=\frac{|Q(w_L,w_R)|}{\|Q\|}=1
$$
and that 
\begin{align*}
\|(u_L,u_R)-(x_L,x_R)\| &\leq \max_{i \in A_L, j \in A_R} \Bigl(\|(u_L(i),u_R(j)) - (\hat{x}_L(i),\hat{x}_R(j)) \| + \| (\hat{x}_L(i), \hat{x}_R(j)) - (x_L(i),x_R(j)) \|\Bigr) \\
&\leq \max \left\{ \sup_{i \in B_L} \|w_L(i) - \hat{x}_L(i)\|, \sup_{j \in B_R} \|w_R(j) - \hat{x}_R(j) \| \right\} + \max \left\{ \frac{\eta^3}{2^5} , \frac{\eta^6}{2^{11}} \right\} \\
&<\frac{\eps}{2}+\frac{\eta^3}{2^5} < \eps.
\end{align*}
Secondly, we have that
\begin{align*}
\|S-T\| &\leq \|SP_{A_L,A_R}-TP_{A_L,A_R}\|+\|TP_{A_L,A_R}-T\|\\
&= \left\|\frac{\widetilde{S}}{\|\widetilde{S}||}-\hat{T}\right\|+\|TP_{A_L,A_R}-T\|\\
&\leq \left\|\frac{\widetilde{S}}{\|\widetilde{S}||}- \widetilde{S}\right\| + \|\widetilde{S} - Q\| + \|Q - R\| + \|R - \hat{T} \| + \|TP_{A_L,A_R}-T\| \\
&< \left| 1 - \|\widetilde{S}\| \right| + \|Q\left(P_{B_L,B_R}+ WP_{A_L\setminus B_L,A_R\setminus B_R}\right)-Q\|  + \frac{\eta^3}{2^4} + \eta + \frac{\eps}{2^4} \\
&< \left|1-\|R\|\right| + \|\hat{T}\left(P_{B_L,B_R}+ WP_{A_L\setminus B_L,A_R\setminus B_R}\right)-\hat{T}\|  +2\|Q-\hat{T}\|+ \frac{\eta^3}{2^4} + \eta + \frac{\eps}{2^4} \\
&\leq \frac{\eps}{2^4}+\eta +  \|\hat{T}P_{B_L,B_R}-\hat{T}\| +\| \hat{T}P_{A_L\setminus B_L,A_R}\|+\|\hat{T}P_{A_L,B_R\setminus A_R}\| + 3 \left( \frac{\eta^3}{2^4} +\eta \right) + \frac{\eps}{2^4} \\
&< \frac{\eps}{2^4}+\eta+ \frac{\eps}{2^2} +  3 \left( \frac{\eta^3}{2^4} +\eta \right) + \frac{\eps}{2^4} \\
&< \eps.
\end{align*}
\end{proof}

Finally, we give our last result which is about the $\mathbf{L}_{p,p}$ for bilinear forms. As we derived Theorem \ref{theorem:c_0-L-p,p} from Theorem \ref{theorem:c_0-uniformly-convex-BPBp}, we deduce the following result on the pair $(c_0(X),c_0(X))$ when $X$ is micro-transitive from Theorem \ref{theorem:c_0-bilinear-form-BPBp}. 

\begin{theorem}
Let $X$ be a micro-transitive Banach space. Then, the pair $(c_0(X),c_0(X))$ has the $\mathbf{L}_{p,p}$ for bilinear forms. 
\end{theorem}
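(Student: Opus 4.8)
The plan is to deduce this from Theorem~\ref{theorem:c_0-bilinear-form-BPBp} in exact parallel with the way Theorem~\ref{theorem:c_0-L-p,p} was obtained from Theorem~\ref{theorem:c_0-uniformly-convex-BPBp}. Since micro-transitivity forces $X$ to be both uniformly convex and uniformly smooth, the pair $(c_0(X),c_0(X))$ already has the BPBp for bilinear forms; let $\eta(\cdot)$ be its modulus, let $\theta(\cdot)$ be the micro-transitivity function of $X$, and let $\gamma(\cdot)$ be the function of Lemma~\ref{lemma:bilinear-projection}. Fix $\eps>0$ and $(x_L,x_R)\in S_{c_0(X)}\times S_{c_0(X)}$, and put $m_{x_L}:=\min_i\{1-\|x_L(i)\|\colon \|x_L(i)\|<1\}$ together with $m_{x_R}$ defined analogously. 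First I would set a threshold $\tau:=\min\bigl\{m_{x_L},m_{x_R},\eta(\theta(\eps/8)/2),\gamma(\eps/8)\bigr\}$ and assume $|T(x_L,x_R)|>1-\tau^{p}$, where $p$ is the power produced by the two successive applications of Lemma~\ref{lemma:convex-series-estimate2} in the next step. Running the same cascade of two set-extractions as in the proof of Theorem~\ref{theorem:c_0-bilinear-form-BPBp} then yields index sets $A_R$ and $A_L$; because $m_{x_L}$ and $m_{x_R}$ enter $\tau$, every coordinate selected into $A_L$ (resp. $A_R$) already has $\|x_L(i)\|=1$ (resp. $\|x_R(j)\|=1$), so both sets are finite and $\hat{x}_L,\hat{x}_R$ coincide with $x_L,x_R$ on them.

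Next I would pass to the finite sums $\ell_\infty^{A_L}(X)\times \ell_\infty^{A_R}(X)$, on which the bilinear analogue of the remark following Theorem~\ref{theorem:c_0-uniformly-convex-BPBp} gives the BPBp for bilinear forms with the same modulus (the proof of Theorem~\ref{theorem:c_0-bilinear-form-BPBp} invokes only the RNP and the bilinear norm-attainment result of \cite{AFW}, both available for finite sums). Applying it to the normalized restriction $\widetilde{T}:=\hat{T}/\|\hat{T}\|$ at the point $(\hat{x}_L,\hat{x}_R)$, which the cascade has driven to within $\tau$ of norm one, produces a bilinear form $\hat{S}\in S_{\Lin^2(\ell_\infty^{A_L}(X)\times \ell_\infty^{A_R}(X))}$ and a pair $(z_L,z_R)$ with $|\hat{S}(z_L,z_R)|=1$ and $\|z_L-\hat{x}_L\|,\|z_R-\hat{x}_R\|,\|\hat{S}-\widetilde{T}\|$ all below $\theta(\eps/8)/2$.

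I would then replace $z_L,z_R$ by their coordinatewise normalizations $\hat{z}_L,\hat{z}_R$ onto $S_X$ in each active slot. The point requiring verification is that norm attainment survives this passage; I would secure it by applying the one-variable convexity argument of Theorem~\ref{theorem:c_0-L-p,p} separately in each slot — first fixing $z_R$ and normalizing the left variable, then fixing $\hat{z}_L$ and normalizing the right variable — using $\|z_L\pm(\hat{z}_L-z_L)\|\le 1$ and the strict convexity of the scalar disk to conclude $|\hat{S}(\hat{z}_L,\hat{z}_R)|=1$. Since $\|\hat{z}_L-z_L\|=\max_i(1-\|z_L(i)\|)$ and likewise on the right, this keeps $\|\hat{z}_L-\hat{x}_L\|,\|\hat{z}_R-\hat{x}_R\|<\theta(\eps/8)$. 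Micro-transitivity then supplies isometries $U_i,V_j$ of $X$ with $U_ix_L(i)=\hat{z}_L(i)$, $V_jx_R(j)=\hat{z}_R(j)$ and $\|U_i-I\|,\|V_j-I\|<\eps/8$ on the active slots (and $U_i=V_j=I$ elsewhere). Setting $S(x,y):=\widetilde{S}\bigl((U_iP_ix)_i,(V_jP_jy)_j\bigr)$ with $\widetilde{S}$ the canonical extension of $\hat{S}$, one gets $|S(x_L,x_R)|=|\hat{S}(\hat{z}_L,\hat{z}_R)|=1$, and $\|S-T\|$ splits as $\|S-\widetilde{S}\|+\|\widetilde{S}-TP_{A_L,A_R}\|+\|TP_{A_L,A_R}-T\|$, whose second summand is bounded by $\|\hat{S}-\widetilde{T}\|+|1-\|\hat{T}\,\||$ and whose third is controlled by Lemma~\ref{lemma:bilinear-projection}.

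The one genuinely new wrinkle, and the step I expect to demand the most care, is the first summand $\|S-\widetilde{S}\|$: because $S$ precomposes $\widetilde{S}$ in \emph{both} variables simultaneously, I would expand, for $\|x\|,\|y\|\le 1$,
$$
\widetilde{S}(Wx,W'y)-\widetilde{S}(x,y)=\widetilde{S}\bigl((W-I)x,W'y\bigr)+\widetilde{S}\bigl(x,(W'-I)y\bigr),
$$
where $W,W'$ denote the coordinatewise isometry operators built from the $U_i$ and the $V_j$, and then use $\|W'\|=1$ to obtain the bound $\|\widetilde{S}\|\bigl(\max_i\|U_i-I\|+\max_j\|V_j-I\|\bigr)<\eps/4$. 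Choosing the fractions of $\eps$ at the outset so that the accumulated error stays below $\eps$ then finishes the argument.
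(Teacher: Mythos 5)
Your proposal is correct and follows essentially the same route as the paper's own (sketched) proof: include $m_{x_L},m_{x_R}$ in the threshold to force the selected coordinates onto the unit sphere, pass to the finite $\ell_\infty$-sums where the BPBp for bilinear forms holds with the same modulus, normalize the new norm-attaining point coordinatewise via the one-variable convexity argument, and precompose with coordinatewise isometries supplied by micro-transitivity. Your explicit two-term splitting of $\|S-\widetilde{S}\|$ and the slightly smaller fractions of $\eps$ only make explicit an estimate the paper leaves implicit.
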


\begin{proof}
Since the proof is very similar with the former one, we just present a sketch of proof instead of giving the full details.

For a given point $(x_L,x_R) \in S_{c_0(X)} \times S_{c_0(X)}$ and $\eps>0$, we take $\gamma(\eps) := \min \left\{ m_{x_L},m_{x_R}, \eta\left(\frac{\theta(\frac{\eps}{3})}{2}\right), \frac{\delta_{\mathbb{C}}\left(\frac{\eps}{6}\right)}{1+\delta_{\mathbb{C}}\left(\frac{\eps}{6}\right)} \right\}$ where $m_{x} := \min_{i \in \N} \{ 1- \|x(i)\| \colon \|x(i)\|<1 \}$, $\delta_{\mathbb{C}}$ is the modulus of $\C$-convexity of $\ell_1(X^*)$, $\eta(\cdot)$ is the function in the definition of the BPBp for bilinear forms of the pair $(c_0(X),c_0(X))$ and $\theta(\cdot)$ is the funtion in the definition of micro-transitivity of $X$. Note that according to the proof of  Theorem \ref{theorem:c_0-bilinear-form-BPBp}, the pair $(\ell_\infty^{C_L}(X),\ell_\infty^{C_R}(X))$ has the BPBp for bilinear forms with the function $\eta(\cdot)$ for arbitrary finite sets $C_A,C_B\subset \mathbb{N}$. 

 Let $T \in \Lin^2(c_0(X) \times c_0(X))$ with $\|T\|=1$ satisfy
$$
|T(x_L,x_R)| > 1-\frac{\gamma^{4}}{2^{6}}.
$$
By taking a suitable rotation, we assume that 
$$
\re T(x_L,x_R) > 1-\frac{\gamma^{4}}{2^{6}}.
$$
As we did in the proof of Theorem \ref{theorem:c_0-bilinear-form-BPBp}, we take suitable subsets $A_L$ and $A_R$ of $\N$ so that 
$$\re TP_{A_L,A_R}(x_L,x_R)=\re T(P_{A_L}x_L,P_{A_R}x_R) > 1-\gamma \quad \text{and} \quad \|TP_{A_L,A_R}-T\|<\frac{\eps}{6}.$$

For the restriction $\hat{T} \in S_{\Lin^2(\ell_\infty^{A_L}(X) \times \ell_\infty^{A_R}(X))}$ of $\frac{TP_{A_L,A_R}}{\|TP_{A_L,A_R}\|}$ and $(\hat{x}_L,\hat{x}_R)=((x_L(i))_{i\in A_L},(x_R(i))_{i\in A_R})\in S_{\ell_\infty^{A_L}(X)\times \ell_\infty^{A_R}(X)}$, apply the BPBp for bilinear forms on the pair $(\ell_\infty^{A_L}(X),\ell_\infty^{A_R}(X))$ to obtain a norm attaining bilinear form $\hat{S} \in S_{\Lin^2(\ell_\infty^{A_L}(X) \times \ell_\infty^{A_R}(X))}$ and its norm attaining point $(\hat{z}_L,\hat{z}_R) \in S_{\ell_\infty^{A_L}(X)}\times S_{\ell_\infty^{A_R}(X)}$ such that 
$$\max\left\{\left\|\hat{S}-\hat{T}\right\|,\left\|(\hat{z}_L,\hat{z}_R)-(\hat{x}_L,\hat{x}_R)\right\|\right\}<\frac{\theta(\frac{\eps}{3})}{2}.$$

Note that the convexity of norm gives that $\left(\left(\frac{\hat{z}_L(i)}{\|\hat{z}_L(i)\|} \right)_{i\in A_L},\left(\frac{\hat{z}_R(i)}{\|\hat{z}_R(i)\|} \right)_{i\in A_R}\right)$ is a norm attaining point of $\hat{S}$. From the micro-transitivity of $X$, there exist isometries $U_i,V_j : X \to X$ for each $i\in A_L$ and $j\in A_R$ so that $U_ix_L(i)=\frac{\hat{z}_L(i)}{\|\hat{z}_L(i)\|}$,  $V_ix_R(j)=\frac{\hat{z}_R(j)}{\|\hat{z}_R(j)\|}$ and $\max_{i\in A_L, j\in A_R}\{\|U_i-I\|,\|V_j-I\|\}<\frac{\eps}{3}$ since $x_L(i)$ and $x_R(j)$ are unit vectors by the definition of $A_L$ and $A_R$ and moreover that
$$\max_{i\in A_L, j\in A_R}\left\{\left\|\frac{\hat{z}_L(i)}{\|\hat{z}_L(i)\|}-x_L(i)\right\|,\left\|\frac{\hat{z}_R(j)}{\|\hat{z}_R(j)\|}-x_R(j)\right\|\right\}<\theta\left(\frac{\eps}{3}\right).
$$
Then, the the canonical extension $S\in \Lin^2(c_0(X) \times c_0(X)) $ of $\widetilde{S}\in \Lin^2(\ell_\infty^{A_L}(X) \times \ell_\infty^{A_R}(X))$ where $\widetilde{S}$ is defined by 
$$\widetilde{S}(y_L,y_R):=\hat{S}\left(\left(U_iy_L(i)\right)_{i\in A_L},\left(V_iy_R(i)\right)_{i\in A_R}\right) \quad \text{for } (y_L,y_R)\in\ell_\infty^{A_L}(X) \times \ell_\infty^{A_R}(X)$$
is the desired bilinear form which means $|S(x_L,x_R)|=\|S\|=1$ and $\|S-T\|<\eps$. 
\end{proof}

\end{document}